\newcommand{\norm}[1]{{\left\|{#1}\right\|}}
\newcommand{\ent}[1]{{\left[{#1}\right]}}
\newcommand{\scal}[1]{{\left\langle{#1}\right\rangle}}
\newcommand{\vp}{\psi^{(\alpha)}_{n,c}}
\newtheorem{theorem}{Theorem}
\newtheorem{lemma}{Lemma}
\newtheorem{corollary}{Corollary}
\newtheorem{proposition}{Proposition}
\newtheorem{remark}{Remark}
\newcounter{reh}
\newcounter{rek}
\numberwithin{equation}{section}
\begin{document}
	\begin{center}
		{\large {\bf Approximation in Hankel Sobolev Space by Circular prolate spheroidal series }}\\
		\vskip 1cm Mourad Boulsane$^*$ {\footnote{
				Corresponding author: Mourad Boulsane, Email: boulsane.mourad@hotmail.fr},\footnote{This work was supported by the DGRST Research Grant LR21ES10 and the PHC-Utique grant 20G1503.}}
	\end{center}
	\vskip 0.5cm {\small
		\noindent $^*$ Carthage University,
		Faculty of Sciences of Bizerte, Department of  Mathematics, Jarzouna, 7021, Tunisia.
	}
	
	\begin{abstract}
	Recently, with the progress of science and the characteristic properties that distinguish the Slepian system called Prolate spheroidal wave functions from the others orthonormal systems, it became clear its important contributions in several areas such as mathematical statistics, signal processing, numerical analysis etc...The main issue of this work is to establish the convergence quality of the truncated error of a function $f$ from the Hankel Sobolev space $\mathcal{H}_{\alpha}^r$, $r > 0$ and $\alpha\geq -1/2$, by a generalized prolate spheroidal wave functions basis in $L^2$ norm.  In the meantime, we will create two uniform approximations of the previous system by two special functions, the first is a Bessel function type and the second is a modified Jacobi polynomial. Furthermore, using the fact that the generalized PSWFs are the eigenfunctions of a Sturm Liouville operator, we will improve a new bound of the associated eigenvalues. The generalized prolate basis we consider here is the circular prolate spheroidal wave function (CPSWFs), called also Hankel prolate, introduced by D. Slepian in 1964, denoted by $\vp$, $c > 0$. The Hankel prolate covers the classical PSWFs $(\alpha=\pm1/2)$ for which corresponding results have been established previously by many authors.
		 
	\end{abstract}
{\it Key words:} Finite Hankel transform operator, eigenfunctions
and eigenvalues, circular prolate spheroidal wave functions.\\[0.3cm]
{\bf 2010 Mathematics Subject Classification.}  42C10, 41A60.
	
	\section{Introduction}
	The problem of approximation by prolate spheroidal wave functions (PSWFs) series is posed strongly in the last decade of this millennium. In fact, they are the best essentially time and band-limited signals with
	bandwidth $c>0$. These properties provide us with good space-time localization tools to be used in signal theory and other fields. Our aim in this paper is to ensure a good approximation quality of a functions $f\in L^2(0,1)$ by its  series expansion in a generalized PSWFs basis called Hankel prolate or circular prolate spheroidal wave functions (CPSWFs) introduced by D. Slepian in his pioneer work \cite{Slepian3}. In reality, we need the quality of the convergence speed where we need some regularity on the desired function, so we will work on the Hankel Sobolev space $\mathcal{H}_{\alpha}^r(0,1)$ that we will more describe it in the sequel. 
	We recall that the circular prolate spheroidal wave functions (CPSWFs) or the 2D Slepian functions are the radial part of the classical band limited  functions defined by D.Slepian on $L^2(B_2)$, where $B_2$ is the unit disk of $\mathbb{R}^2$. More precisely, in \cite{Slepian3}, the author has proved that the CPSWFs, denoted by $\vp$, are the eigenfunctions of the finite Hankel transform $\mathcal{H}_c^{\alpha}$ defined on $L^2(0,1)$ by the associated  Bessel kernel $H_c^{\alpha}(x,y)=\sqrt{cxy}J_{\alpha}(cxy)$ with $\mu_n^{\alpha}(c)$ are the associated eigenvalues. That is
	\begin{equation}\label{eq5}
	\mathcal{H}_c^{\alpha}(\vp)(x)=\int_0^1\sqrt{cxy}J_{\alpha}(cxy)\vp(y)dy=\mu_n^{\alpha}(c)\vp(x) ~~~~~ \forall x\in [0,1],
	\end{equation}
	where $J_{\alpha}$ is the Bessel function of the first type and order $\alpha\geq -1/2$. From the compact self-adjoint properties of $\mathcal{H}_c^{\alpha}$, the 2d-Slepian functions form an orthonormal basis of $L^2(0,1).$ 
	Moreover, they form an orthogonal basis of the Hankel Paley Wiener space $\mathcal {HB}_c^{\alpha}$, the space of functions from $L^2(0,\infty)$ with Hankel transforms supported on $[0,c]$, that is
	\begin{equation}\label{eq0.5}
	\int_0^{\infty}\vp(y)\psi_{m,c}^{(\alpha)}(y)dy=\frac{1}{c(\mu_n^{\alpha}(c))^2}\delta_{n,m},~~~~~
	\mathcal H^{\alpha}(\vp)=\frac{1}{c\mu_{n,\alpha}(c)}\vp(\frac{.}{c})\chi_{[0,c]}.
	\end{equation}
	and
	\begin{equation}\label{HBc}
	\mathcal{H}B^\alpha_c=\left \{ f\in L^2(0,\infty),\,\, \mbox{Support} \mathcal{H}^{\alpha}f\subseteq [0,c] \right\}
	\end{equation}
	Here the Hankel operator $\mathcal{H}^\alpha$ defined on $L^2(0,\infty)$ with kernel $K(x,y)=\sqrt{xy}J_{\alpha}(xy)$. By a lucky incident, D. Slepian has proved that our integral operator $\mathcal{H}_c^{\alpha}$ commutes with a Sturm-Liouville differential operator $\mathcal{L}_c^{\alpha}$ defined on $C^2(0,1)$ by 
	\begin{equation}
	\mathcal{L}_c^{\alpha}(\phi)=\dfrac{d}{dx} \left[ (1-x^2)\dfrac{d}{dx} \phi \right]+\left(\dfrac{\dfrac{1}{4}-\alpha^2}{x^2}-c^2x^2\right)\phi
	\end{equation}
	Consequently,  $\vp$ is the $n-$th order bounded   eigenfunction of the positive self adjoint operator $-\mathcal{L}_c^{\alpha}$  associated with the eigenvalue $\chi_{n}^{\alpha}(c),$ that is
	\begin{equation}
	\label{differ_operator2}
	-\dfrac{d}{dx} \left[ (1-x^2)\dfrac{d}{dx} \vp(x) \right] - \left( \dfrac{\dfrac{1}{4}-\alpha^2}{x^2}-c^2x^2 \right)\vp(x)=\chi_{n}^{\alpha}(c)\vp(x),\quad x\in [0,1].
	\end{equation}
	Further, D.Slepian has given a local estimate for $\chi_n^{\alpha}(c)$ by using the min-max theorem, which given by the following inequality
	\begin{equation}\label{e5}
	(2n+\alpha+1/2)(2n+\alpha+3/2)\leq \chi_n^{\alpha}(c)\leq (2n+\alpha+1/2)(2n+\alpha+3/2)+c^2.
	\end{equation}
	For more details about the CPSWFs, the reader is refereed to \cite{Karoui-Boulsane} and \cite{M.B}.
	
	From the beginning, the question of approximation by a different system such as trigonometric functions, orthogonal polynomials, Bessel functions etc... is an important issue that has been carefully considering in harmonic analysis. In this paper, we are interesting of the Slepian's type bases. In \cite{Karoui-Bonami1} and \cite{W}, the authors had solved this problem in the classical PSWFs basis introduced by D. Slepian and his co-authors H. Landau and H. Pollak in a series of paper in the 1960s. Moreover, this question had solved in the case of weighted Prolate (WPSWFs) basis introduced by wang in \cite{W.Z} and \cite{K.S}. In this work, we will extend the previous studies to the case of the CPSWFs basis. The answer has been proven in section 4 and it is given by the following main result. For every $f\in\mathcal{H}^r_\alpha(0,1)$ and for every $N>\frac{ec}{4}$, we have
	\begin{equation}
	\norm{f-\pi^{\alpha}_{N,c}(f)}_{L^2(0,1)}\leq \frac{C^1_{\alpha}}{\left(\lambda^{(\alpha)}_{N+1}\right)^r}\norm{f}_{\mathcal{H}^r_{\alpha}\left(0,1\right)}+\frac{C^2_{\alpha}}{N^{1/2}}e^{-aN}\norm{f}_{L^2(0,1)}.
	\end{equation}
	Here $a>0$ uniformly constant depending only of $\alpha$, $(\lambda_N^{(\alpha)})_N$ is the sequence of positive zeros of Bessel Function $J_\alpha$ of the first kind and $\pi_{N,c}^\alpha$ is the projection operator over the span of $\{\vp, n\leq N\}$ given by $\pi_{N,c}^\alpha(f)=\sum_{n=0}^Na_n(f)\vp$. It is very important to say that we gratefully acknowledge many important factors that helped us achieve our aim. Specifically, the uniform approximation of the CPSWFs and the quality of the decay rate of its series expansion coefficients in a modified Bessel basis of $L^2(0,1)$ and modified Jacobi polynomials basis described later, contributed greatly to overcoming obstacles.
		
	This work is organized as follows. In section 2, we give some mathematical preliminaries on Hankel Sobolev space and related properties, then we try to define a new Sobolev space related to the differential operator $\mathcal{L}_c^\alpha$ on $C^2(0,1)$. Section 3 is devoted to the uniform approximation of the CPSWFs in terms of two special functions. The first is a modified Bessel function when we will ensure the important approximation in the neighbourhood of $1$. Then, we are going to finish with the uniform approximation by a modified Jacobi polynomial of order $(\alpha,0)$ denoted by $T_n^\alpha$ that we will define it in the sequel. It is important to notice the quality of approximation of the CPSWFs in the neighbourhood of $0$ using the last estimate. We will finish by quoting our main result in section 4, that we will study the quality of approximation of a function $f$ from Hankel Sobolev space by its truncated CPSWFs series .
	\section{Mathematical Preliminaries}
	In this preliminary part, we present a construction of the Hankel Sobolev space with an associated norm, then we create a new Sobolev space associated to a Sturm Liouville differential operator type in order to compare the two associated norms. Finally, we give some properties of the modified Bessel function family $\phi^{(\alpha)}_j$ over $L^2(0,1)$ defined by $\phi^{(\alpha)}_j(x)=\frac{\sqrt{2x}J_{\alpha}(\lambda_j^{(\alpha)}x)}{|J_{\alpha+1}(\lambda_j^{(\alpha)})|}$, where $J_\alpha$ is the Bessel function of the first kind of order $\alpha\geq -1/2$ and $\lambda_j^{(\alpha)}$ is the associated $j-$th order positive zero.
	\subsection{Some facts about Hankel Sobolev space $\mathcal{H}^r_{\alpha}\left(I\right)$}
We recall that the classical Sobolev of index $r>0$ is the subspace of $L^2(I)$ defined by $$H^r\left(I\right)=\{u\in L^2(I);~~~ \forall k\leq r , ~~~u^{(k)}\in L^2(I)\}.$$
When $I=\mathbb{R}$ and by a simple application of Perseval's identity, $H^r$ is equivalent to the following subspace 
$$H^r\left(\mathbb{R}\right)=\{u\in L^2(\mathbb{R});~~~ \int_{\mathbb{R}}(1+x^2)^{r}|\mathcal{F}(u)(x)|^2dx<\infty\},$$ 
where $\mathcal{F}$ is the Fourier transform. Using the same principle, we define the Banach Hankel Sobolev space $\mathcal{H}^r_{\alpha}\left(I\right)$, $I\subset (0,\infty),$ by
\begin{equation}
\mathcal{H}^r_{\alpha}\left(I\right)=\{u\in L^2(I);~~~ x^{(\alpha+k+1/2)}\left(\frac{1}{x}\frac{d}{dx}\right)^{(k)}\left(x^{-(\alpha+1/2)}u\right)\in L^2(I)~\,,\forall k\leq r\}.
\end{equation}
Note that when $I=(0,1)$ and by applying $\left(\frac{1}{x}\frac{\partial}{\partial x}\right)^k$ on \eqref{eq5}, $k\geq 0,$ we obtain
\begin{equation*}
\left(\frac{1}{x}\frac{d}{dx}\right)^{(k)}\left(x^{-(\alpha+1/2)}\vp\right)(x)=\frac{c^{\alpha+1/2}}{\mu_n^{(\alpha)}(c)}\int_0^1\left(\frac{1}{x}\frac{d}{dx}\right)^{(k)}\left(\frac{J_{\alpha}(cxy)}{(cxy)^{\alpha}}\right)y^{\alpha+1/2}\vp(y)dy.
\end{equation*}
From \cite{O.L.B.C}, we have $\left(\frac{1}{x}\frac{d}{dx}\right)^{(k)}\left(\frac{J_{\alpha}(cxy)}{(cxy)^{\alpha}}\right)=(-1)^k(cy)^{2k}\frac{J_{\alpha+k}(cxy)}{(cxy)^{\alpha+k}}$. Then, we get
\begin{eqnarray}
x^{(\alpha+k+1/2)}\left(\frac{1}{x}\frac{d}{dx}\right)^{(k)}\left(x^{-(\alpha+1/2)}\vp\right)&=&\frac{(-c)^k}{\mu_n^{(\alpha)}(c)}\mathcal{H}_c^{\alpha+k}(x^k\vp).
\end{eqnarray}
Here $\mathcal{H}_c^{\alpha}$ is the finite Hankel transform. From the previous details and the fact that the family of CPSWFs forms an orthonormal system in $L^2(0,1),$ we are sure that the later is a subset of $\mathcal{H}^r_{\alpha}\left(0,1\right).$ 
Moreover, when $I= (0,\infty)$ and taking into account the self-adjoint property of Hankel operator $\mathcal{H}^{\alpha}$ satisfying $\mathcal{H}^{\alpha}\circ\mathcal{H}^{\alpha}=id_{L^2(0,\infty)}$ with the use of \cite{O.L.B.C}, we get
\begin{eqnarray*} x^{(\alpha+k+1/2)}\left(\frac{1}{x}\frac{d}{dx}\right)^{(k)}\left(x^{-(\alpha+1/2)}u\right)(x)&=&x^{(\alpha+k+1/2)}\int_0^{\infty}\left(\frac{1}{x}\frac{d}{dx}\right)^{(k)}\left(\frac{J_{\alpha}(xy)}{(xy)^{\alpha}}\right)y^{\alpha+1/2}\mathcal{H}^{\alpha}(u)(y)dy\\&=&(-1)^k\int_0^{\infty}\sqrt{xy}J_{\alpha+k}(xy)y^k\mathcal{H}^{\alpha}(u)(y)dy\\&=&(-1)^k\mathcal{H}^{\alpha+k}(x^k\mathcal{H}^{\alpha}(u))(x).
\end{eqnarray*}
By using Perseval's identity, we can finally check that
$$x^{(\alpha+k+1/2)}\left(\frac{1}{x}\frac{d}{dx}\right)^{(k)}\left(x^{-(\alpha+1/2)}u\right)\in L^2(0,\infty)\Leftrightarrow \mathcal{H}^{\alpha+k}(x^k\mathcal{H}^{\alpha}(u))\in L^2(0,\infty)\Leftrightarrow x^k\mathcal{H}^{\alpha}(u)\in L^2(0,\infty).$$
Hence $\mathcal{H}^r_{\alpha}(0,\infty)$ is equivalent to
\begin{equation}
\mathcal{H}^r_{\alpha}\left(0,\infty\right)=\{u\in L^2(0,\infty);~~~ \int_0^{\infty}(1+y^2)^r|\mathcal{H}^{\alpha}(u)(y)|^2dy<\infty\}
\end{equation} 
endowed with the associated norm defined by $\norm{u}^2_{\mathcal{H}^r_{\alpha}\left((0,\infty)\right)}=\int_0^{\infty}(1+y^2)^r|\mathcal{H}^{\alpha}(u)(y)|^2dy$. \\Consequently, from \eqref{eq0.5} and the fact that the CPSWFs family form an orthonormal basis of $L^2(0,1)$ we have for every $n\in\mathbb{N},$ $\psi_{n,c}^{\alpha}\in\mathcal{H}^r_{\alpha}\left(0,\infty\right).$ The previous  space contains more than the prolates, we notice that any function with compactly supported Hankel transform is an element of the later. We conclude that the Hankel Paley Wiener space $\mathcal {HB}_c^{\alpha}$ given in \eqref{HBc} is a subspace of $\mathcal{H}^r_{\alpha}\left(0,\infty\right).$\\
In order to improve the results of our paper and based on \cite{W.Z}, we need to introduce the Sturm Liouville Sobolev space $\widetilde{\mathcal{H}}^r_{\alpha}\left(I\right)$ associated to the differential operator $\mathcal{L}_c^{\alpha}$. Firstly, we introduce the bilinear form $\mathcal{K}_c^{\alpha}$ defined on $E=\{\varphi\in C^2(0,1); \,\varphi(0)=0\}$ by
\begin{eqnarray*}
\mathcal{K}_c^{\alpha}(\varphi,\psi)&=&\scal{-\mathcal{L}_c^{\alpha}(\varphi),\psi}_{L^2((0,1)}\\&=&\scal{\varphi',\psi'}_{L^2((0,1),\omega)}+c^2\scal{x\varphi,x\psi}_{L^2(0,1)}-(\frac{1}{4}-\alpha^2)\scal{\frac{\varphi}{x},\frac{\psi}{x}}_{L^2(0,1)}.
\end{eqnarray*}
where $\omega(x)=(1-x^2).$ The Sturm Liouville operator $-\mathcal{L}_c^{\alpha}$ is positive, symmetric and self-adjoint. We define the fractional operator $\left(-\mathcal{L}_c^{\alpha}\right)^{1/2}$ satisfying the following property
\begin{equation}
\mathcal{K}_c^{\alpha}(\varphi,\varphi)=\norm{\left(-\mathcal{L}_c^{\alpha}\right)^{1/2}(\varphi)}^2_{L^2(0,1)}
\end{equation}
which imply that for every $m\in\mathbb{N}$, we have 
\begin{equation}\label{eq0.6}
\begin{cases}
\norm{\left(-\mathcal{L}_c^{\alpha}\right)^{m+1/2}(\varphi)}^2_{L^2(0,1)}=\mathcal{K}_c^{\alpha}\Big(\left(-\mathcal{L}_c^{\alpha}\right)^m(\varphi),\left(-\mathcal{L}_c^{\alpha}\right)^m(\varphi)\Big)
\\
\norm{\left(-\mathcal{L}_c^{\alpha}\right)^{m+1}(\varphi)}^2_{L^2(0,1)}=\mathcal{K}_c^{\alpha}\Big(\left(-\mathcal{L}_c^{\alpha}\right)^m(\varphi),\left(-\mathcal{L}_c^{\alpha}\right)^{m+1}(\varphi)\Big).
\end{cases}
\end{equation}
Let $m\in\mathbb{N}^*,$ we define the Sturm Liouville Sobolev space on $L^2(0,1)$ by
\begin{equation}
\widetilde{\mathcal{H}}^m_{\alpha}\left(I\right)=\{\varphi\in L^2(0,1);~~~\left(-\mathcal{L}_c^{\alpha}\right)^{\frac{m}{2}}(\varphi)\in L^2(0,1)\}.
\end{equation}
To the space $\widetilde{\mathcal{H}}^m_{\alpha}\left(I\right),$ we define the associated norm $$\norm{\varphi}^2_{\widetilde{\mathcal{H}}^m_{\alpha}\left(I\right)}=\norm{\left(-\mathcal{L}_c^{\alpha}\right)^{\frac{m}{2}}(\varphi)}^2_{L^2(0,1)}=\mathcal{K}_c^{\alpha}\Big(\left(-\mathcal{L}_c^{\alpha}\right)^{\frac{m-1}{2}}(\varphi),\left(-\mathcal{L}_c^{\alpha}\right)^{\frac{m-1}{2}}(\varphi)\Big).$$
In the same context, we improved more the definition of our space, so for a function $\varphi\in L^2(0,1)$ developed in CPSWFs basis and the fact that $-\mathcal{L}_c^{\alpha}(\vp)=\chi_n^{\alpha}(c)\vp$, we have
$$\left(-\mathcal{L}_c^{\alpha}\right)^m(\varphi)=\sum_{n=0}^{\infty}a_n(\varphi)\left(\chi_n^{\alpha}(c)\right)^m\vp~~~~~~~~a_n(\varphi)=\int_0^1\varphi(x)\vp(x)dx.$$
It follows from \eqref{eq0.6}, by orthogonality and the fact that $\mathcal{K}_c^{\alpha}(\vp,\psi_{m,c}^{(\alpha)})=\chi_n^{\alpha}(c)\delta_{n,m}$
\begin{equation}
\begin{cases}
\norm{\left(-\mathcal{L}_c^{\alpha}\right)^m(\varphi)}^2_{L^2(0,1)}=\displaystyle\sum_{n=0}^{\infty}\left(\chi_n^{\alpha}(c)\right)^{2m}|a_n(\varphi)|^2 \\ \norm{\left(-\mathcal{L}_c^{\alpha}\right)^{m+\frac{1}{2}}(\varphi)}^2_{L^2(0,1)}=\displaystyle\sum_{n=0}^{\infty}\left(\chi_n^{\alpha}(c)\right)^{2m+1}|a_n(\varphi)|^2
\end{cases}.
\end{equation}
We conclude now from the two last identities that
\begin{equation}
\widetilde{\mathcal{H}}^{2m}_{\alpha}\left(I\right)=\{\varphi\in L^2(0,1);~~~\displaystyle\sum_{n=0}^{\infty}\left(\chi_n^{\alpha}(c)\right)^{2m}|a_n(\varphi)|^2<\infty\}
\end{equation}
and
\begin{equation}
\widetilde{\mathcal{H}}^{2m+1}_{\alpha}\left(I\right)=\{\varphi\in L^2(0,1);~~~\displaystyle\sum_{n=0}^{\infty}\left(\chi_n^{\alpha}(c)\right)^{2m+1}|a_n(\varphi)|^2<\infty\}.
\end{equation} 
Finally, by space interpolation method, we introduce the Hilbert space $\widetilde{\mathcal{H}}^{r}_{\alpha}\left(I\right)$, $r>0$, given by $$\widetilde{\mathcal{H}}^{r}_{\alpha}\left(I\right)=\{\varphi\in L^2(0,1);~~~\norm{\varphi}^2_{\widetilde{\mathcal{H}}^{r}_{\alpha}\left(I\right)}=\norm{\left(-\mathcal{L}_c^{\alpha}\right)^{\frac{r}{2}}(\varphi)}^2_{L^2(0,1)}=\displaystyle\sum_{n=0}^{\infty}\left(\chi_n^{\alpha}(c)\right)^{r}|a_n(\varphi)|^2<\infty\}.$$
\subsection{Some facts about $\phi^{(\alpha)}_j$}
Let $(\lambda_j^{(\alpha)})_{j\geq 1}$ be the set of the positive zeros of Bessel function $J_{\alpha}$ of the first kind and order $\alpha\geq -1/2$ satisfies the following estimate given in \cite{Elbert} by $\lambda_j^{(\alpha)}\sim \pi j+\frac{\pi}{2}(\alpha-\frac{1}{2})$ when $(j\to\infty)$. It's well known that the family $\phi^{(\alpha)}_j$ defined on $L^2(0,1)$ by $$\phi^{(\alpha)}_j(x)=\frac{\sqrt{2x}J_{\alpha}(\lambda_j^{(\alpha)}x)}{|J_{\alpha+1}(\lambda_j^{(\alpha)})|},\,\,x\in(0,1)$$ forms an orthonormal system, that is
\begin{equation}
\int_0^1xJ_{\alpha}(\lambda_i^{(\alpha)}x)J_{\alpha}(\lambda_j^{(\alpha)}x)dx=\frac{|J_{\alpha+1}(\lambda_j^{(\alpha)})|^2}{2}\delta_{i,j}.
\end{equation}
Moreover, from \cite{H}, the previous system forms an orthonormal basis of $L^2(0,1)$ and by a lucky coincident, the span of $(\phi^{(\alpha)}_j)$ is a subspace of $\mathcal{H}^r_{\alpha}\left(0,1\right).$ Indeed, from \cite{O.L.B.C}, we have for any integer $k\geq 0$
\begin{eqnarray}\label{e1}
x^{(\alpha+k+1/2)}\left(\frac{1}{x}\frac{d}{dx}\right)^{(k)}\left(x^{-(\alpha+1/2)}\phi^{(\alpha)}_j\right)&=&(\lambda_j^{(\alpha)})^{\alpha}\frac{\sqrt{2}x^{(\alpha+k+1/2)}}{|J_{\alpha+1}(\lambda_j^{(\alpha)})|}\left(\frac{1}{x}\frac{d}{dx}\right)^{(k)}\left(\frac{J_{\alpha}(\lambda_j^{(\alpha)}x)}{(\lambda_j^{(\alpha)}x)^{\alpha}}\right)\nonumber\\&=&(-1)^k(\lambda_j^{(\alpha)})^{\alpha+2k}\frac{\sqrt{2}x^{(\alpha+k+1/2)}}{|J_{\alpha+1}(\lambda_j^{(\alpha)})|}\frac{J_{\alpha+k}(\lambda_j^{(\alpha)}x)}{(\lambda_j^{(\alpha)}x)^{\alpha+k}}\nonumber\\&=&(-\lambda_j^{(\alpha)})^k\frac{|J_{\alpha+k+1}(\lambda_j^{(\alpha+k)})|}{|J_{\alpha+1}(\lambda_j^{(\alpha)})|}\phi^{(\alpha+k)}_j\left(\frac{\lambda_j^{(\alpha)}}{\lambda_j^{(\alpha+k)}}x\right).
\end{eqnarray}
Then, for every $k\geq 0$, we have $$x^{(\alpha+k+1/2)}\left(\frac{1}{x}\frac{d}{dx}\right)^{(k)}\left(x^{-(\alpha+1/2)}\phi^{(\alpha)}_j\right)=(-\lambda_j^{(\alpha)})^k\frac{|J_{\alpha+k+1}(\lambda_j^{(\alpha+k)})|}{|J_{\alpha+1}(\lambda_j^{(\alpha)})|}\phi^{(\alpha+k)}_j\left(\frac{\lambda_j^{(\alpha)}}{\lambda_j^{(\alpha+k)}}x\right)\in L^2(0,1).$$ On the other hand, $\phi^{(\alpha)}_j$ is a solution of a Sturm Liouville differential equation, see \cite{Watson}, given by
\begin{equation}
V''+\left((\lambda_j^{(\alpha)})^2+\frac{1/4-\alpha^2}{x^2}\right)V=0.
\end{equation}
Hence, $(\phi^{(\alpha)}_j)$ are the eigenfunctions of the positive self adjoint differential operator $(-\mathcal{L})$ defined on $C^2(0,1)$ by  $(-\mathcal{L})(V)=-V''-\left(\frac{1/4-\alpha^2}{x^2}\right)V$ with associated eigenvalues $((\lambda_j^{(\alpha)})^2)$. In the following and by using the same techniques given in \cite{N}, we can prove that our norm defined on $\mathcal{H}^r_{\alpha}\left(0,1\right)$ by $$\norm{u}^2_{\mathcal{H}^r_{\alpha}\left(0,1\right)}=\sum_{k=0}^r\norm{x^{(\alpha+k+1/2)}\left(\frac{1}{x}\frac{d}{dx}\right)^{(k)}\left(x^{-(\alpha+1/2)}u\right)}^2_{L^2(0,1)}$$ is equivalent to the following norm defined on $\mathcal{H}^r_{\alpha}\left(0,1\right)$ by $$\norm{u}_{\alpha,r}^2=\norm{\left(-\mathcal{L}\right)^r(u)}^2_{L^2(0,1)}=\sum_{j=1}^{+\infty}\lambda_j^{2r}b^2_j(u)$$	where $b_j(u)=\int_0^1\phi^{(\alpha)}_j(x)u(x)dx$, that is

	\begin{equation}\label{e2}
	\norm{u}_{\alpha,r}\sim \norm{u}_{\mathcal{H}^r_{\alpha}\left(0,1\right)}.
	\end{equation}
\section{Uniform Approximation of the $\mathcal{L}_c^{\alpha}-$spectrum}
The uniform approximation problem of CPSWFs has been studied in \cite{K.M}, but a precise estimate is not solved on the whole interval $(0,1)$. So in this section we improve the uniform approximation estimate of the CPSWFs in terms of Bessel function of the first kind and order $0$, then in terms of Jacobi polynomials of order $(\alpha,0)$ on $(0,1)$. In the meantime, we deduce a new bound of the eigenvalues $\chi_n^{\alpha}(c)$ which we will use it for the proof of our main result. Let us specify before starting that the various details given in the following paragraph are strongly inspired from the classic case given in \cite{Karoui-Bonami2} and \cite{K.S}.
\subsection{Uniform Estimate in terms of Bessel functions}
It is known that the CPSWFs satisfy a Sturm-Liouville differential equation given by \eqref{differ_operator2}, that is
\begin{equation}\label{e11}
\frac{\partial}{\partial x}\left[(1-x^2)\vp\right]-\frac{(\alpha^2-1/4)}{x^2}\vp=-\chi_n^{\alpha}(c)(1-qx^2)\vp,\,\,x\in(0,1),
\end{equation}
where $q=\frac{c^2}{\chi_n^{\alpha}(c)}<1$. Using the Liouville transform given by $U(S(x))=(1-x^2)^{1/4}(1-qx^2)^{1/4}\vp(x)$, where $S(x)=\displaystyle\int_x^1\sqrt{\frac{1-qt^2}{1-t^2}}dt,\,\,x\in(0,1)$, $U$ satisfies the following second order differential equation 
\begin{equation}\label{e6}
U''(S(x))+\left(\chi_n^{\alpha}(c)+\theta_{\alpha}(S(x))\right)U(S(x))=0,\,\,x\in(0,1).
\end{equation}
Here $$(\theta_{\alpha}\circ S)(x)=\frac{(1/4-\alpha^2)}{x^2(1-qx^2)}+(1-qx^2)^{-1}\varphi^{-1}(x)\frac{d}{dx}\left[(1-x^2)\varphi'(x)\right]$$ and $$\varphi(x)=(1-x^2)^{-1/4}(1-qx^2)^{-1/4}.$$ For more details about Liouville transformation see \cite{V.S.M}. A straightforward computation gives us an explicit expression for $\theta_{\alpha}\circ S$ that is for $Q(x)=(1-x^2)(1-qx^2)$ and from the fact that $\displaystyle\frac{\varphi'}{\varphi}=-\frac{1}{4}\frac{Q'}{Q},$ we have
\begin{eqnarray}\label{e10}
	\theta_{\alpha}\circ S(x)&=&\frac{(1/4-\alpha^2)}{x^2(1-qx^2)}+\frac{1}{16(1-qx^2)}\left[(1-x^2)\left(\frac{Q'}{Q}\right)^2-4\frac{d}{dx}\left[(1-x^2)\frac{Q'}{Q}\right]\right]\\&=&\frac{(1/4-\alpha^2)}{x^2(1-qx^2)}+\frac{1}{2(1-qx^2)}+\frac{x^2}{4(1-qx^2)(1-x^2)}\nonumber\\&+&\frac{q}{2(1-qx^2)^2}+q\frac{x^2}{4(1-qx^2)^2}-\frac{5q(1-q)x^2}{4(1-qx^2)^3}\nonumber\\&=&\frac{(1/4-\alpha^2)}{x^2(1-qx^2)}+\theta\circ S(x)\nonumber.
\end{eqnarray}
 Note that $\theta\circ S$ is the classical case given in \cite{Karoui-Bonami2} satisfying the following inequality
 \begin{equation}\label{e12}
 \left|\theta\circ S(x)-\frac{1}{4S^2(x)}\right|\leq\frac{3+2q}{4}\frac{1}{(1-qx^2)^2},\,~~~x\in(0,1).
 \end{equation}
 
  Note that we can transform the equation \eqref{e6} in the radial Schr\"odinger equation form, that is \begin{equation*}
U''(s)=\left(-\chi_n^{\alpha}(c)-\frac{1}{4s^2}+g_{\alpha}(s)\right)U(s) \,~~ s\in(0,S(0)),
\end{equation*}
where $g_{\alpha}(s)=\displaystyle\frac{1}{4s^2}-\theta_{\alpha}(s).$ By using Olver's techniques see \cite{Olver}, $U$ has the following form
\begin{equation}
U(s)=A_n^{\alpha}\sqrt{\sqrt{\chi_n^{\alpha}(c)}s}\,J_{0}(\sqrt{\chi_n^{\alpha}(c)}s)+\sqrt{s}\,\varepsilon_{n,c}^{\alpha}(s) \,~~ s\in(0,S(0)),
\end{equation}
with
\begin{equation} \left|\sqrt{s}\varepsilon_{n,c}^{\alpha}(s)\right|\leq \sqrt{s}\frac{M_{0}(\sqrt{\chi_n^{\alpha}(c)}s)}{E_{0}(\sqrt{\chi_n^{\alpha}(c)}s)}\left[\exp(G_{\alpha,n}(s))-1\right],~~s\in(0,S(0)).\end{equation} Here $G_{\alpha,n}(s)=\frac{\pi}{2}\displaystyle\int_0^stM_{0}^2\left(\sqrt{\chi^{\alpha}_n(c)}t\right)|g_{\alpha}(t)|dt$ and $M_{\alpha}$, $E_{\alpha}$ are defined as follows. Let $X_{\alpha}$ be the first positive zero of the equation $J_{\alpha}(x)+Y_{\alpha}(x)=0$, then 
\begin{equation*}
M_{\alpha}(x)=\begin{cases}
\left(2|Y_{\alpha}(x)|J_{\alpha}(x)\right)^{1/2} &\mbox{ if } 0\leq x\leq X_{\alpha}\\\left(J^2_{\alpha}(x)+Y^2_{\alpha}(x)\right)^{1/2} &\mbox{ if } x\geq X_{\alpha}\end{cases}\mbox{and}~
E_{\alpha}(x)=\begin{cases}
-\left(\frac{Y_{\alpha(x)}}{J_{\alpha}(x)}\right)^{1/2} &\mbox{ if } 0\leq x\leq X_{\alpha}\\ 1 &\mbox{ if } x\geq X_{\alpha}	\end{cases}.
\end{equation*}
Hence,
\begin{equation}\label{p1}
 \displaystyle\sqrt{x}\frac{M_{\alpha}(x)}{E_{\alpha}(x)}=\begin{cases}
		\sqrt{2x}J_{\alpha}(x)&\mbox{ if } 0\leq x\leq X_{\alpha}\\\sqrt{x}\left(J^2_{\alpha}(x)+Y^2_{\alpha}(x)\right)^{1/2} &\mbox{ if } x\geq X_{\alpha}
		\end{cases}
			\end{equation}
			and
		\begin{equation*}
		 \displaystyle\sup_{x>0}xM^2_{\alpha}(x)\leq m_{\alpha}=\begin{cases}
		\frac{2}{\pi} &\mbox{ when } |\alpha|\leq 1/2 \\ \mbox{ finite and larger than $\frac{2}{\pi}$}  &\mbox{ when }|\alpha|\geq 1/2   
		\end{cases}.
	\end{equation*}
	In order to estimate our rest $\sqrt{s}\varepsilon_{n,c}^{\alpha}(s),$ we need to estimate before $G_{\alpha,n}\circ S$ which is given in the following lemma.
\begin{lemma}
	Let $\alpha\geq-1/2$ and $c>0$ then for every $0\leq q\leq 1/2$, we have 
	\begin{equation}
	\left|G_{\alpha,n}(S(x))\right|\leq\frac{2}{(1-q)\sqrt{\chi_{n}^{\alpha}(c)}}\left(\frac{|\alpha^2-1/4|}{x^2}+(3+2q)\right)\frac{\sqrt{1-x^2}}{\sqrt{1-qx^2}},\,~~x\in(0,1).
	\end{equation}
\end{lemma}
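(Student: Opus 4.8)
The plan is to reduce the estimate on $G_{\alpha,n}(S(x))$ to two elementary integrals, by first controlling the weight $g_\alpha$ along the Liouville variable and then performing the change of variables $t=S(u)$. The starting point is a pointwise bound on $|g_\alpha(S(x))|$. Since $g_\alpha(s)=\frac{1}{4s^2}-\theta_\alpha(s)$ and, by \eqref{e10}, $\theta_\alpha\circ S(x)=\frac{1/4-\alpha^2}{x^2(1-qx^2)}+\theta\circ S(x)$, one has $g_\alpha(S(x))=-\frac{1/4-\alpha^2}{x^2(1-qx^2)}+\big(\frac{1}{4S^2(x)}-\theta\circ S(x)\big)$. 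Combining with the classical estimate \eqref{e12} gives
\[
|g_\alpha(S(x))|\le\frac{|\alpha^2-1/4|}{x^2(1-qx^2)}+\frac{3+2q}{4}\frac{1}{(1-qx^2)^2},\qquad x\in(0,1).
\]

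Next I would substitute $t=S(u)$ in the integral defining $G_{\alpha,n}$, using $S'(u)=-\sqrt{\frac{1-qu^2}{1-u^2}}$ and $S(1)=0$, which turns $\int_0^{S(x)}$ into $\int_x^1$ and produces the Jacobian $\sqrt{\frac{1-qu^2}{1-u^2}}$. The uniform bound $\sup_{y>0}yM_0^2(y)\le m_0=\frac{2}{\pi}$ (applicable since the order here is $0$) gives $S(u)M_0^2\big(\sqrt{\chi_n^\alpha(c)}\,S(u)\big)\le\frac{2}{\pi\sqrt{\chi_n^\alpha(c)}}$, so the prefactor $\frac{\pi}{2}$ cancels and
\[
G_{\alpha,n}(S(x))\le\frac{1}{\sqrt{\chi_n^\alpha(c)}}\int_x^1|g_\alpha(S(u))|\sqrt{\frac{1-qu^2}{1-u^2}}\,du.
\]
Inserting the pointwise bound splits the right-hand side into a term $I_1$ carrying $|\alpha^2-1/4|$ and a term $I_2$ carrying $(3+2q)$.

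For $I_1$ I would use $1-qu^2\ge 1-q$ to extract $\frac{1}{\sqrt{1-q}}$, then the elementary primitive $\int_x^1\frac{du}{u^2\sqrt{1-u^2}}=\frac{\sqrt{1-x^2}}{x}$, followed by the trivial comparisons $x\sqrt{1-qx^2}\le1$ and $\frac{1}{\sqrt{1-q}}\le\frac{2}{1-q}$, reaching $\frac{2|\alpha^2-1/4|}{(1-q)x^2}\frac{\sqrt{1-x^2}}{\sqrt{1-qx^2}}$. For $I_2$ I would bound $(1-qu^2)^{-3/2}\le(1-q)^{-3/2}$, compute $\int_x^1\frac{du}{\sqrt{1-u^2}}=\arccos x$, and invoke the inequality $\arccos x\le\frac{\pi}{2}\sqrt{1-x^2}\le\frac{\pi}{2}\frac{\sqrt{1-x^2}}{\sqrt{1-qx^2}}$, which is proved by checking that the derivative of $\frac{\pi}{2}\sqrt{1-x^2}-\arccos x$ changes sign only once on $[0,1]$ while the function vanishes at both endpoints. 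Adding $I_1$ and $I_2$ then reproduces the claimed bound exactly.

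The only genuinely delicate point is the constant bookkeeping in $I_2$: the crude estimates leave the factor $\frac{\pi(3+2q)}{8(1-q)^{3/2}}$, and matching it against the target $\frac{2(3+2q)}{1-q}$ requires $\frac{\pi}{8}\le 2\sqrt{1-q}$, i.e. $\sqrt{1-q}\ge\frac{\pi}{16}$. This is precisely where the hypothesis $0\le q\le 1/2$ enters. Everything else is routine, and the main conceptual step is recognizing that replacing $1-qu^2$ by its lower bound $1-q$ collapses both integrals to the standard primitives of $\frac{1}{u^2\sqrt{1-u^2}}$ and $\frac{1}{\sqrt{1-u^2}}$, so that no special functions beyond $\arccos$ are needed.
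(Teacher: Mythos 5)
Your proof is correct. The first two steps coincide with the paper's: the same pointwise bound $|g_\alpha(S(x))|\le \frac{|\alpha^2-1/4|}{x^2(1-qx^2)}+\frac{3+2q}{4(1-qx^2)^2}$ obtained from the decomposition $\theta_\alpha\circ S=\theta\circ S+\frac{1/4-\alpha^2}{x^2(1-qx^2)}$ together with \eqref{e12}, and the same reduction of $G_{\alpha,n}$ via $\sup_{y>0}yM_0^2(y)\le 2/\pi$ and the substitution $t=S(u)$, yielding $\frac{1}{\sqrt{\chi_n^\alpha(c)}}\int_x^1|g_\alpha(S(u))|\sqrt{\tfrac{1-qu^2}{1-u^2}}\,du$. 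Where you genuinely diverge is the treatment of the resulting integrals. The paper does \emph{not} split: it bounds $\frac{1}{y^2}\le\frac{1}{x^2}$ and $(1-qy^2)^{-1/2}\le(1-qy^2)^{-3/2}$ so that both terms collapse onto the single integral $\int_x^1\frac{dy}{(1-y^2)^{1/2}(1-qy^2)^{3/2}}$, and then invokes the exact evaluation from \cite{Karoui-Bonami2}, namely $\int_x^1\frac{dy}{(1-y^2)^{1/2}(1-qy^2)^{3/2}}=\frac{1}{1-q}\bigl(\frac{qx\sqrt{1-x^2}}{\sqrt{1-qx^2}}+S(x)\bigr)\le\frac{2}{1-q}\frac{\sqrt{1-x^2}}{\sqrt{1-qx^2}}$, which delivers the constant $\frac{2}{1-q}$ in one stroke. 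You instead freeze $1-qu^2$ at its minimum $1-q$ and evaluate the two elementary primitives $-\frac{\sqrt{1-u^2}}{u}$ and $\arccos u$, closing with $\arccos x\le\frac{\pi}{2}\sqrt{1-x^2}$ (your endpoint-and-monotonicity argument for this inequality is sound). Your route buys self-containedness: it needs no cited identity, only calculus. What it costs is sharpness in the constants, which is why you must spend slack at the end ($\sqrt{1-q}\ge\pi/16$); note this is far from where $q\le 1/2$ is tight, so the hypothesis enters your proof only loosely, whereas in the paper's route the restriction on $q$ plays no visible role in this lemma at all (it is absorbed in the cited inequality and in the surrounding analysis). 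Both arguments reach exactly the stated bound.
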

\begin{proof}
Recall that $g_{\alpha}\circ S$ has the following form $g_{\alpha}\circ S(x)=\displaystyle\frac{1}{4S^2(x)}-\theta_{\alpha}\circ S(x)$, which we can rewrite in the following form 
$$g_{\alpha}\circ S(x)=\displaystyle\frac{1}{4S^2(x)}-\theta\circ S(x)-\frac{(1/4-\alpha^2)}{x^2(1-qx^2)}.$$ 
From \eqref{e12}, we can easily check that 
\begin{equation}
\left|g_{\alpha}(S(x))\right|\leq\frac{|\alpha^2-1/4|}{x^2(1-qx^2)}+\frac{(3+2q)}{4(1-qx^2)^2}.
\end{equation}
It follows from \eqref{p1} and the previous inequality that 
\begin{eqnarray*}
\left|G_{\alpha,n}(S(x))\right|&\leq& \frac{|\alpha^2-1/4|}{\sqrt{\chi_{n}^{\alpha}(c)}}\int_x^1\frac{dy}{y^2(1-y^2)^{1/2}(1-qy^2)^{1/2}}+\frac{(3+2q)}{\sqrt{\chi_{n}^{\alpha}(c)}}\int_x^1\frac{dy}{(1-y^2)^{1/2}(1-qy^2)^{3/2}}dy\\&\leq&\frac{1}{\sqrt{\chi_{n}^{\alpha}(c)}}\left(\frac{|\alpha^2-1/4|}{x^2}+(3+2q)\right)\int_x^1\frac{dy}{(1-y^2)^{1/2}(1-qy^2)^{3/2}}dy.
\end{eqnarray*}
Using the following inequality from \cite{Karoui-Bonami2},  
$$\int_x^1\frac{dy}{(1-y^2)^{1/2}(1-qy^2)^{3/2}}dy=\frac{1}{1-q}\left(\frac{qx\sqrt{1-x^2}}{\sqrt{1-qx^2}}+S(x)\right)\leq\frac{2}{1-q}\frac{\sqrt{1-x^2}}{\sqrt{1-qx^2}}, $$
we conclude that
\begin{eqnarray*}
\left|G_{\alpha,n}(S(x))\right|&\leq&\frac{2}{(1-q)\sqrt{\chi_{n}^{\alpha}(c)}}\left(\frac{|\alpha^2-1/4|}{x^2}+(3+2q)\right)\frac{\sqrt{1-x^2}}{\sqrt{1-qx^2}}.
\end{eqnarray*}	
\end{proof}
By using the previous analysis, we obtain the following main result that generalizes a similar result in \cite{Karoui-Bonami2}.
\begin{theorem}
	Let $\alpha\geq-1/2$ and $c>0,$ then for every integer $n\in\mathbb{N}$ such that $q=\frac{c^2}{\chi_n^{\alpha}(c)}\leq 1/2$ and $x\in(0,1)$, we have 
	\begin{equation}
	\vp(x)=A_n^{\alpha}\frac{\sqrt{\sqrt{\chi_n^{\alpha}(c)}S(x)}J_{0}(\sqrt{\chi_n^{\alpha}(c)}S(x))}{(1-x^2)^{1/4}(1-qx^2)^{1/4}}+R_{n,c}^{\alpha}(S(x)),
	\end{equation}
	where $\left|R_{n,c}^{\alpha}(S(x))\right|\leq\displaystyle\frac{2}{(1-q)\sqrt{\chi_{n}^{\alpha}(c)}}\left(\frac{|\alpha^2-1/4|}{x^2}+(3+2q)\right)\displaystyle\frac{(1-x^2)^{1/4}}{(1-qx^2)^{3/4}}.$
\end{theorem}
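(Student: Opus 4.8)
The plan is to read the representation and its remainder directly off the Liouville substitution, and then to convert the multiplicative Olver error estimate into the additive bound claimed, using the Lemma together with the pointwise bounds on $M_0/E_0$ already recorded. First I would invert the Liouville transform: since $U(S(x))=(1-x^2)^{1/4}(1-qx^2)^{1/4}\vp(x)$, we get $\vp(x)=U(S(x))/[(1-x^2)^{1/4}(1-qx^2)^{1/4}]$, and substituting the Olver representation $U(s)=A_n^\alpha\sqrt{\sqrt{\chi_n^\alpha(c)}\,s}\,J_0(\sqrt{\chi_n^\alpha(c)}\,s)+\sqrt s\,\varepsilon_{n,c}^\alpha(s)$ at $s=S(x)$ reproduces the stated main term and forces
$$R_{n,c}^\alpha(S(x))=\frac{\sqrt{S(x)}\,\varepsilon_{n,c}^\alpha(S(x))}{(1-x^2)^{1/4}(1-qx^2)^{1/4}}.$$
Hence everything reduces to estimating $|\sqrt{S(x)}\,\varepsilon_{n,c}^\alpha(S(x))|$.

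Next I would assemble the three available ingredients. The Olver bound gives $|\sqrt s\,\varepsilon_{n,c}^\alpha(s)|\le \sqrt s\,\frac{M_0(w)}{E_0(w)}\,[\exp(G_{\alpha,n}(s))-1]$ with $w=\sqrt{\chi_n^\alpha(c)}\,s$. Using \eqref{p1} and $\sup_{w>0}wM_0^2(w)\le 2/\pi$, the prefactor rewrites as $\sqrt s\,\frac{M_0(w)}{E_0(w)}=\chi_n^\alpha(c)^{-1/4}\,\sqrt w\,\frac{M_0(w)}{E_0(w)}$ and is therefore bounded by an absolute constant times $\chi_n^\alpha(c)^{-1/4}$, hence by $1$. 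The Lemma then controls the exponent by
$$G_{\alpha,n}(S(x))\le \frac{2}{(1-q)\sqrt{\chi_n^\alpha(c)}}\Big(\frac{|\alpha^2-1/4|}{x^2}+(3+2q)\Big)\frac{\sqrt{1-x^2}}{\sqrt{1-qx^2}}=:G_{\mathrm{bd}}(x).$$

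The decisive step is to pass from the multiplicative factor $\exp(G_{\alpha,n})-1$ to the linear quantity $G_{\mathrm{bd}}(x)$. Writing $P(x)=\sqrt{S(x)}\,M_0(w)/E_0(w)$, I would verify the pointwise inequality $P(x)\,[\exp(G_{\alpha,n}(S(x)))-1]\le G_{\alpha,n}(S(x))$; this holds because $P(x)\le 1$ and, once $G_{\alpha,n}$ is small (which the $\chi_n^\alpha(c)^{-1/2}$ factor in $G_{\mathrm{bd}}$ provides away from the left endpoint), the ratio $(\exp(G_{\alpha,n})-1)/G_{\alpha,n}$ is close to $1$, so that $P(x)\cdot(\exp(G_{\alpha,n})-1)/G_{\alpha,n}\le 1$. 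Combined with $G_{\alpha,n}(S(x))\le G_{\mathrm{bd}}(x)$ this yields $|\sqrt{S(x)}\,\varepsilon_{n,c}^\alpha(S(x))|\le G_{\mathrm{bd}}(x)$. Dividing by $(1-x^2)^{1/4}(1-qx^2)^{1/4}$ then turns $\frac{\sqrt{1-x^2}}{\sqrt{1-qx^2}}$ into exactly $\frac{(1-x^2)^{1/4}}{(1-qx^2)^{3/4}}$, which is the announced bound on $R_{n,c}^\alpha(S(x))$.

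The main obstacle is precisely this linearisation: the Olver estimate is intrinsically multiplicative while the target is additive, so the argument hinges on the prefactor $\sqrt{S(x)}\,M_0/E_0$ decaying like $\chi_n^\alpha(c)^{-1/4}$ and on $G_{\alpha,n}$ being small enough that $\exp(G_{\alpha,n})-1$ and $G_{\alpha,n}$ remain comparable; the hypothesis $q\le 1/2$ enters here through the Lemma, where it makes the controlling integral bound valid. The singular $1/x^2$ growth near the left endpoint is inherited unchanged from the Lemma and signals that this order-zero Bessel model is sharp near $x=1$; the complementary behaviour near $x=0$ is exactly what the subsequent Jacobi-polynomial approximation is designed to capture.
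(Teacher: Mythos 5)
Your proposal is essentially the paper's own argument: the paper gives no separate proof of this theorem beyond the phrase ``by using the previous analysis,'' and that analysis is precisely your assembly --- invert the Liouville transform $U(S(x))=(1-x^2)^{1/4}(1-qx^2)^{1/4}\psi^{(\alpha)}_{n,c}(x)$, insert Olver's representation $U(s)=A_n^{\alpha}\sqrt{\sqrt{\chi_n^{\alpha}(c)}\,s}\,J_0\!\left(\sqrt{\chi_n^{\alpha}(c)}\,s\right)+\sqrt{s}\,\varepsilon_{n,c}^{\alpha}(s)$ with its error bound, control $G_{\alpha,n}\circ S$ by the Lemma, and divide by $(1-x^2)^{1/4}(1-qx^2)^{1/4}$ to turn $\frac{\sqrt{1-x^2}}{\sqrt{1-qx^2}}$ into $\frac{(1-x^2)^{1/4}}{(1-qx^2)^{3/4}}$. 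The step you rightly isolate as decisive --- replacing $\sqrt{S(x)}\,\frac{M_0}{E_0}\left[\exp\left(G_{\alpha,n}(S(x))\right)-1\right]$ by the linear quantity $G_{\alpha,n}(S(x))$ --- is exactly the step the paper passes over in silence, and your justification (prefactor $\lesssim\left(\chi_n^{\alpha}(c)\right)^{-1/4}$ together with smallness of $G_{\alpha,n}$) is no less complete than the paper's; note only that neither treatment truly closes it near $x=0$ when $\alpha^2\neq 1/4$, since there $G_{\alpha,n}(S(x))$ blows up and $\exp(G_{\alpha,n})-1$ grows exponentially in $1/x$ while the claimed remainder bound grows only like $1/x^2$.
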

The following proposition ensure the uniform boundedness of $U\circ S=(1-x^2)^{1/4}(1-qx^2)^{1/4}\vp$ over $(0,1)$.
\begin{proposition}
	Let $\alpha\geq 1/2$ and $c>0$ be two fixed real numbers such that $c^2>\alpha^2-1/4$, then for every integer $n\in\mathbb{N}$ such that $q=\frac{c^2}{\chi_n^{\alpha}(c)}\leq 1/2$, there exists $x_n^*\in(0,1)$ satisfying the following estimate
	\begin{equation}\label{e3}
	\sup_{x\in(0,1)}\left|U\left(S(x)\right)\right|=\left|U\left(S(x_n^*)\right)\right|\leq\left[\left|U(S(\delta_n^1))\right|^2+\frac{1}{\chi_n^{\alpha}(c)+\theta_\alpha(S(\delta_n^1))}\left|U'(S(\delta_n^1))\right|^2\right]^{1/2},
	\end{equation}
	where 
	\begin{equation}\label{e14}
	\delta_n^1=\sqrt{\frac{2(\alpha^2-1/4)}{\chi^{\alpha}_n(c)}}<x_n^*<\delta_n^2=\frac{\pi+\frac{\pi}{2}\alpha-\frac{3}{4}}{\sqrt{\chi_n^\alpha(c)+1/4-\alpha^2}}.
	\end{equation}
\end{proposition}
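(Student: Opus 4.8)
The plan is to use the classical energy (Sonin) function associated with the second order equation
$$U''(s)+\Phi(s)U(s)=0,\qquad \Phi(s)=\chi_n^{\alpha}(c)+\theta_{\alpha}(s),$$
satisfied by $U$ for $s\in(0,S(0))$ (this is \eqref{e6} written in the $s$ variable). On any subinterval where $\Phi>0$ I would set
$$F(s)=U^2(s)+\frac{\left(U'(s)\right)^2}{\Phi(s)}.$$
Differentiating and using $U''=-\Phi U$ gives $F'(s)=-\dfrac{\left(U'(s)\right)^2\Phi'(s)}{\Phi(s)^2}=-\dfrac{\left(U'(s)\right)^2\theta_{\alpha}'(s)}{\Phi(s)^2}$, so the sign of $F'$ is opposite to that of $\theta_{\alpha}'$. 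Two consequences will be used repeatedly: since $\Phi>0$ one has $|U(s)|^2\le F(s)$, and at any critical point $s^*$ of $U$ one has $U'(s^*)=0$, hence $|U(s^*)|^2=F(s^*)$.

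Next I would establish the existence and the two-sided localization \eqref{e14} of the maximizer. Because the weight $(1-x^2)^{1/4}(1-qx^2)^{1/4}$ vanishes at $x=1$ and $\vp$ behaves like $x^{\alpha+1/2}$ near $0$, the continuous map $x\mapsto U(S(x))$ vanishes at both endpoints of $(0,1)$; hence $|U\circ S|$ attains its supremum at an interior point $x_n^*$, and since $S'\ne 0$ this forces $U'(S(x_n^*))=0$ and $\sup_{x}|U(S(x))|^2=F(S(x_n^*))$. For the lower bound $x_n^*>\delta_n^1$ I would show that on $(0,\delta_n^1]$ the potential $\Phi$ is non-positive: the leading singular part $\dfrac{1/4-\alpha^2}{x^2(1-qx^2)}$ of $\theta_{\alpha}$ (see \eqref{e10}) is, for $\alpha\ge 1/2$, so negative there that $\chi_n^{\alpha}(c)+\theta_{\alpha}(S(x))\le 0$, the factor $2$ in $\delta_n^1=\sqrt{2(\alpha^2-1/4)/\chi_n^{\alpha}(c)}$ absorbing the bounded remainder $\theta$ controlled by \eqref{e12}. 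Where $\Phi\le 0$ the relation $U''=-\Phi U$ makes $U$ convex when $U>0$ and concave when $U<0$, so $|U|$ admits no interior maximum and $x_n^*>\delta_n^1$. The upper bound $x_n^*<\delta_n^2$ I would obtain by comparison with the leading Bessel term $J_{0}(\sqrt{\chi_n^{\alpha}(c)}\,S(x))$ of the preceding theorem: the global maximum must occur before the oscillatory factor first turns over, which is quantified by $\delta_n^2$.

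Finally, to derive \eqref{e3} I would invoke the monotonicity of $F$. On $(\delta_n^1,x_n^*)$ a monotonicity analysis of the explicit expression \eqref{e10} shows that $\theta_{\alpha}\circ S$ is increasing in $x$ when $\alpha\ge 1/2$, driven by its singular term; equivalently $\theta_{\alpha}$ is decreasing in $s$ on the corresponding interval $[S(x_n^*),S(\delta_n^1)]$, so $F'\ge 0$ there and therefore $F(S(x_n^*))\le F(S(\delta_n^1))$. Combining this with $|U(S(x_n^*))|^2=F(S(x_n^*))$ and expanding $F(S(\delta_n^1))$ yields
$$\sup_{x\in(0,1)}|U(S(x))|^2\le |U(S(\delta_n^1))|^2+\frac{1}{\chi_n^{\alpha}(c)+\theta_{\alpha}(S(\delta_n^1))}|U'(S(\delta_n^1))|^2,$$
which is \eqref{e3}. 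The delicate point—and the step I expect to be the main obstacle—is the sharp two-sided localization \eqref{e14} of $x_n^*$: it requires a careful sign and monotonicity analysis of $\theta_{\alpha}\circ S$ together with control of $U$ near the singular endpoint $x=0$, and it is precisely here that the hypotheses $\alpha\ge 1/2$, $c^2>\alpha^2-1/4$ and $q\le 1/2$ (ensuring $\delta_n^1<\delta_n^2<1$ and the positivity of $\Phi$ beyond $\delta_n^1$) are genuinely used.
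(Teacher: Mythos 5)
Your energy-function mechanism for \eqref{e3} is sound and is, in substance, the paper's own argument written in differential form: the paper multiplies \eqref{e6} by $U'$, integrates from $t$ to $\varepsilon_n^{\alpha}=S(\delta_n^1)$, and applies Gr\"onwall's lemma to $a(t)U^2(t)/2+(U')^2(t)/2$ with $a=\chi_n^{\alpha}(c)+\theta_{\alpha}$, which yields exactly your conclusion $F(S(x_n^*))\le F(S(\delta_n^1))$; your identity $F'=-(U')^2\theta_{\alpha}'/\Phi^2\ge 0$ (valid because $\theta_{\alpha}\circ S$ is increasing in $x$, hence $\theta_{\alpha}$ decreasing in $s$) reaches the same bound more cleanly. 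Your existence argument for an interior maximizer with $U'(S(x_n^*))=0$ is also acceptable, although the paper derives it (together with the localization before the first zero of $\vp$) from Butlewski's theorem on the local maxima of solutions of \eqref{e6}.

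The genuine gap is in the lower bound of \eqref{e14}, precisely the step you flag as the crux. Your claim that $\Phi=\chi_n^{\alpha}(c)+\theta_{\alpha}\circ S\le 0$ on $(0,\delta_n^1]$ is false, and the factor $2$ in $\delta_n^1$ works in the opposite direction to what you assert. Since $\theta\circ S>0$, one has $\Phi(S(x))\ge \chi_n^{\alpha}(c)-\frac{\alpha^2-1/4}{x^2(1-qx^2)}$, and the hypotheses $c^2>\alpha^2-1/4$, $q\le 1/2$ give $q(\delta_n^1)^2=\frac{2(\alpha^2-1/4)}{c^2}q^2<2q^2\le \frac12$, hence $(\delta_n^1)^2\bigl(1-q(\delta_n^1)^2\bigr)>\frac{\alpha^2-1/4}{\chi_n^{\alpha}(c)}$, i.e. $\Phi(S(\delta_n^1))>0$. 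This positivity is not incidental: it is exactly what makes the right-hand side of \eqref{e3} meaningful, and both you and the paper need it to run the energy argument on $[S(x_n^*),S(\delta_n^1)]$; it is incompatible with your claim that $\Phi\le 0$ up to $\delta_n^1$. The actual sign change of $\Phi$ occurs at a turning point $x_{tp}$ with $x_{tp}^2(1-qx_{tp}^2)\approx \frac{\alpha^2-1/4}{\chi_n^{\alpha}(c)}$, so $x_{tp}\approx \delta_n^1/\sqrt{2}<\delta_n^1$, and your convexity argument (no interior maximum of $|U|$ where $\Phi\le 0$) therefore only yields $x_n^*>x_{tp}$, not $x_n^*>\delta_n^1$. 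For what it is worth, the paper's own proof stops at the same point -- it shows $x_{tp}\lesssim\delta_n^1$ and $x_n^*>x_{tp}$ and then simply writes $\delta_n^1<x_n^*$ -- but your justification is wrong in sign rather than merely incomplete. Finally, the upper bound $x_n^*<\delta_n^2$ is not proved in your sketch ("before the oscillatory factor first turns over" is not an argument); the paper does not prove it either but imports it from \cite{Karoui-Boulsane}, which is the honest way to close that step.
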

\begin{proof}
From a straightforward computation and the monotonicity of $\theta\circ S$ given in \cite{Karoui-Bonami2}, we conclude that $\theta_{\alpha}\circ S$ given by \eqref{e10} is increasing on $(0,1).$ Then from the decay of $S$ we have $\theta_{\alpha}$ is decreasing in $(0,S(0))$. Using Butlewski's theorem, the local maxima of $|U|,$ the solution of \eqref{e6}, are increasing in $(0,S(0))$. Hence, there exists $t_n^*\in(0,S(0))$ such that $\displaystyle\sup_{s\in(0,S(0))}|U(s)|=|U(t_n^*)|$. From the fact that $S$ is invertible and decreasing, there exists $x_n^*\in(0,x_n^{1})$ such that $$\displaystyle\sup_{x\in(0,1)}|U(S(x))|=|U(S(x_n^*))|$$
	where $ x_n^{1}$ is the first positive zero of $\vp$ on $(0,1).$ We give an upper and a lower bound of $x_n^*.$ The upper bound given in \eqref{e14} is proved in \cite{Karoui-Boulsane}. Let's focus on the lower bound.  
	We rewrite \eqref{e6} in the following form 
	\begin{eqnarray*}
	\frac{d}{dx}\left(U'(S(x))\right)&=&\Big(\chi_n^{\alpha}(c)+\theta_\alpha(S(x))\Big)\frac{(1-qx^2)^{1/2}}{(1-x^2)^{1/2}}U(S(x))\\&=&\left(\chi_n^{\alpha}(c)+\frac{1/4-\alpha^2}{x^2(1-qx^2)}+\theta\circ S(x)\right)\frac{(1-qx^2)^{1/2}}{(1-x^2)^{1/2}}U(S(x)).
	\end{eqnarray*}
Therefore, under the positivity condition of    $\delta_n(x)=\chi_n^{\alpha}(c)+\frac{1/4-\alpha^2}{x^2(1-qx^2)}+\theta\circ S(x)$ and the fact that $U'(S(x_n^*))=0$, we conclude that $U\circ S$ and its derivate have the same positive sign over $(0,x_n^*)$.
Indeed, from the positivity of $\theta\circ S,$ if $x^2(1-qx^2)>\frac{\alpha^2-1/4}{\chi^{\alpha}_n(c)}$ then $\delta_n>0$. More precisely, we can easily check that $$x^2(1-qx^2)>\frac{\alpha^2-1/4}{\chi^{\alpha}_n(c)}\Leftrightarrow \sqrt{\frac{1}{2q}\left(1-\sqrt{1-4q\frac{\alpha^2-1/4}{\chi^{\alpha}_n(c)}}\right)}<x<1.$$ Then, we conclude that $\sqrt{\frac{1}{2q}\left(1-\sqrt{1-4q\frac{\alpha^2-1/4}{\chi^{\alpha}_n(c)}}\right)}\lesssim\delta_n^1=\sqrt{\frac{2(\alpha^2-1/4)}{\chi^{\alpha}_n(c)}}<x_n^*.$ 
  
It is easy to check that for every $t<\varepsilon_n^\alpha=S(\delta_n^\alpha)$, $a(t)=\chi_n^\alpha(c)+\theta_\alpha(t)>0$, using the fact that $\theta_\alpha\circ S$ is increasing and $a(\varepsilon_n^\alpha)>0.$ Multiply \eqref{e6} by $U'$ and integrate between $t$ and $\varepsilon_n^\alpha$, we obtain
\begin{equation}
\int_t^{\varepsilon_n^\alpha}a(s)U(s)U'(s)ds=\frac{(U')^2(t)}{2}-\frac{(U')^2(\varepsilon_n^\alpha)}{2}
\end{equation}
From an integration by parts, we have
\begin{equation}
\int_t^{\varepsilon_n^\alpha}a'(s)\frac{(U)^2(s)}{2}ds=a(\varepsilon_n^\alpha)\frac{(U)^2(\varepsilon_n^\alpha)}{2}-a(t)\frac{(U)^2(t)}{2}-\int_t^{\varepsilon_n^\alpha}a(s)U(s)U'(s)ds
\end{equation}
It follow that
\begin{eqnarray*}
\frac{(U')^2(t)}{2}+a(t)\frac{(U)^2(t)}{2}&=&a(\varepsilon_n^\alpha)\frac{(U)^2(\varepsilon_n^\alpha)}{2}+\frac{(U')^2(\varepsilon_n^\alpha)}{2}-\int_t^{\varepsilon_n^\alpha}a'(s)\frac{(U)^2(s)}{2}ds\\&=& C_n^\alpha+\int_t^{\varepsilon_n^\alpha}\left(-\frac{a'(s)}{a(s)}\right)a(s)\frac{(U)^2(s)}{2}ds
\end{eqnarray*}
 Finally, for every $0\leq t<\varepsilon_n^\alpha,$ we have
 \begin{equation*}
 a(t)\frac{(U)^2(t)}{2}\leq C_n^\alpha+\int_t^{\varepsilon_n^\alpha}\left(-\frac{a'(s)}{a(s)}\right)a(s)\frac{(U)^2(s)}{2}ds. 
 \end{equation*}
 From the Gr\"onwall Lemma and the positivity of $a$, we get 
 \begin{equation*}
 U^2(t)\leq 2\frac{C_n^\alpha}{a(\varepsilon_n^\alpha)}=\left|U(\varepsilon_n^\alpha)\right|^2+\frac{1}{\chi_n^{\alpha}(c)+\theta_\alpha(\varepsilon_n^\alpha)}\left|U'(\varepsilon_n^\alpha)\right|^2,\,\,~ 0\leq t<\varepsilon_n^\alpha. 
 \end{equation*}
 It remains to take $t=S(x_n^*)$ then we get our result.
\end{proof}
The following theorem improves the Slepian's bounds of the eigenvalues $\chi_n^{\alpha}(c)$ given in \eqref{e5}.
\begin{theorem}
	Let $c>0$ and $\alpha\geq-1/2$ be two fixed real numbers, then for every $\varepsilon>0,$ there exist $n_0\geq 0$ such that for every $n\geq n_0,$ we have
	\begin{equation}\label{e16}
	(2n+\alpha+\frac{1}{2})(2n+\alpha+\frac{3}{2})+(1/4-\varepsilon)c^2\leq \chi_{n}^{\alpha}(c)\leq (2n+\alpha+\frac{1}{2})(2n+\alpha+\frac{3}{2})+c^2
	\end{equation}
\end{theorem}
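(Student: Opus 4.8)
The upper bound is already Slepian's estimate \eqref{e5}, so the whole content of the theorem is the improved lower bound, and that is all I would address. The plan starts from the observation that the lower Slepian bound is attained at $c=0$: writing $-\mathcal{L}_{c}^{\alpha}=-\mathcal{L}_{0}^{\alpha}+c^{2}x^{2}$ and applying the min--max principle to the nonnegative perturbation $c^{2}x^{2}$ gives $\chi_{n}^{\alpha}(c)\geq\chi_{n}^{\alpha}(0)=(2n+\alpha+\tfrac12)(2n+\alpha+\tfrac32)$ (the latter being the known $c=0$ eigenvalue, attached to the modified Jacobi eigenfunctions). Hence it suffices to prove $\chi_{n}^{\alpha}(c)-\chi_{n}^{\alpha}(0)\geq(\tfrac14-\varepsilon)c^{2}$. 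Setting $u=c^{2}$, the perturbing operator is multiplication by $x^{2}$, so the Hellmann--Feynman formula for the simple eigenvalue $\chi_{n}^{\alpha}(\sqrt u)$ with its normalized eigenfunction reads
\[
\frac{d}{du}\,\chi_{n}^{\alpha}(\sqrt u)=\int_{0}^{1}x^{2}\,\big|\psi_{n,\sqrt u}^{(\alpha)}(x)\big|^{2}\,dx .
\]
Integrating from $0$ to $c^{2}$ reduces the whole theorem to the single equidistribution estimate
\[
\int_{0}^{1}x^{2}\,\big|\psi_{n,c'}^{(\alpha)}(x)\big|^{2}\,dx\ \geq\ \tfrac14-\varepsilon\qquad\text{for all }c'\in[0,c]\text{ and all }n\geq n_{0}.
\]

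To prove this I would insert the Bessel approximation of the previous Theorem into the integral. For $x$ bounded away from $0$ one has, with $q=c'^{2}/\chi_{n}^{\alpha}(c')$,
\[
\big|\psi_{n,c'}^{(\alpha)}(x)\big|^{2}=|A_{n}^{\alpha}|^{2}\,\frac{\sqrt{\chi_{n}^{\alpha}(c')}\,S(x)\,J_{0}^{2}\!\big(\sqrt{\chi_{n}^{\alpha}(c')}\,S(x)\big)}{\sqrt{(1-x^{2})(1-qx^{2})}}+\big(\text{remainder}\big),
\]
the remainder controlled by $R_{n,c'}^{\alpha}$. Since $\chi_{n}^{\alpha}(c')\geq\chi_{n}^{\alpha}(0)\geq 4n^{2}$, we get $q\leq c^{2}/\chi_{n}^{\alpha}(0)\to 0$ uniformly for $c'\in[0,c]$, so the hypothesis $q\leq\tfrac12$ of the approximation theorem holds for $n\geq n_{0}$. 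Writing $z=\sqrt{\chi_{n}^{\alpha}(c')}\,S(x)$, the factor $z\,J_{0}^{2}(z)$ oscillates rapidly for large $n$ with mean value $1/\pi$; excising a small interval $(0,\delta)$, on which $\int_{0}^{\delta}x^{2}|\psi_{n,c'}^{(\alpha)}|^{2}\,dx\leq\delta^{2}\int_{0}^{1}|\psi_{n,c'}^{(\alpha)}|^{2}\,dx=\delta^{2}$ because $\|\psi_{n,c'}^{(\alpha)}\|_{2}=1$, one finds that both the normalization integral and the second moment are, up to a uniform $o(1)$, equal to $|A_{n}^{\alpha}|^{2}\pi^{-1}$ times the corresponding moments of the weight $[(1-x^{2})(1-qx^{2})]^{-1/2}$.

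Taking the ratio of these two expressions eliminates the unknown constant $|A_{n}^{\alpha}|^{2}$ and yields
\[
\int_{0}^{1}x^{2}\,\big|\psi_{n,c'}^{(\alpha)}\big|^{2}\,dx=\frac{\displaystyle\int_{0}^{1}x^{2}\,[(1-x^{2})(1-qx^{2})]^{-1/2}\,dx}{\displaystyle\int_{0}^{1}[(1-x^{2})(1-qx^{2})]^{-1/2}\,dx}+o(1)\ \xrightarrow[q\to0]{}\ \frac{\pi/4}{\pi/2}=\frac12 .
\]
As $q\to0$ uniformly in $c'$ when $n\to\infty$, the left-hand side is at least $\tfrac12-\varepsilon\geq\tfrac14-\varepsilon$ once $n\geq n_{0}$, and the Hellmann--Feynman integration then delivers the claimed lower bound. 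Note that this route uses only the Bessel approximation (valid for $\alpha\geq-1/2$) and not the Proposition, so it covers the full range of $\alpha$.

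I expect the main difficulty to be the rigorous and \emph{uniform} control of the oscillatory second moment $\int_{0}^{1}x^{2}\,z\,J_{0}^{2}(z)\,[(1-x^{2})(1-qx^{2})]^{-1/2}\,dx$: one must quantify the replacement of $z\,J_{0}^{2}(z)$ by its mean $1/\pi$ and simultaneously absorb the approximation remainder $R_{n,c'}^{\alpha}$, which blows up like $x^{-2}$ as $x\to0$, so that the error is $o(1)$ uniformly for $c'\in[0,c]$. This is eased considerably by the fact that the target constant is only $\tfrac14-\varepsilon$ while the actual limit is $\tfrac12$, leaving a margin of $\tfrac14$ within which even crude bounds on these error terms suffice.
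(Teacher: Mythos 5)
Your reduction coincides with the paper's up to and including its key formula: the paper also differentiates the ODE in $c$ to obtain $\partial_c\chi_{n}^{\alpha}(c)=2c\int_0^1x^2\left(\psi_{n,c}^{(\alpha)}(x)\right)^2dx$ (its \eqref{eq1}), integrates on $(0,c)$ using $\chi_n^{\alpha}(0)=(2n+\alpha+\frac12)(2n+\alpha+\frac32)$, and is thereby reduced to exactly your second-moment bound $B(c')=\int_0^1x^2(\psi_{n,c'}^{(\alpha)})^2dx\geq \frac14-\varepsilon$ (the upper bound follows from $B\leq 1$, which also recovers Slepian). Where you diverge is in proving that moment bound. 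The paper stays algebraic: $B\geq\scal{x^2\psi_{n,c'}^{(\alpha)},T_n^{\alpha}}^2$, evaluated through the three-term recursion for $x^2T_n^{\alpha}$ together with the coefficient decay $d_n^n=1-O(1/n^2)$, $d_{n\pm1}^n=O(1/n)$ cited from Boulsane--Jaming--Souabni; this yields $B\geq(\gamma_{n,\alpha}^0)^2-O(1/n)=\frac14-O(1/n)$ with no oscillatory analysis at all. Your WKB route, if completed, would in fact give the sharper limit $B\to\frac12$ (the paper's squaring step is lossy), but as written it is not complete.

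The genuine gap is at the origin. Your ratio trick eliminates $|A_n^{\alpha}|^2$ only if you can certify that most of the normalization integral lives on $(\delta,1)$, i.e. that $\int_0^\delta(\psi_{n,c'}^{(\alpha)})^2dx$ is uniformly small (at the very least $\leq\frac12-2\varepsilon$) for all large $n$ and $c'\in[0,c]$. The excision bound you invoke, $\int_0^\delta x^2(\psi_{n,c'}^{(\alpha)})^2dx\leq\delta^2$, controls the second moment there --- which is harmless anyway --- but says nothing about the $L^2$ mass: if a fraction $1-o(1)$ of the mass sat in $(0,\delta)$, then $|A_n^{\alpha}|^2$, and with it $B$, could be $o(1)$, and no bound of the form $\frac14-\varepsilon$ follows. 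The Bessel-$J_0$ approximation cannot close this hole, because its remainder is of size $\chi^{-1/2}x^{-2}$: the square of that error integrated over $(\chi^{-1/2},\delta)$ diverges like $\chi^{1/2}$, so the approximation controls the mass only down to $x\sim\chi^{-1/3}$ and never reaches the turning-point region $x\sim\chi^{-1/2}$, which is precisely where $|U\circ S|$ peaks (cf. \eqref{e14}). Ruling out concentration at $0$ therefore requires an extra ingredient --- the paper's Jacobi approximation \eqref{e13} near the origin (which you explicitly decline to use), a local $\sqrt{x}J_\alpha$ model of the equation near $0$, or a uniform bound on $U$ --- and this is the missing idea in your sketch; the margin between $\frac12$ and $\frac14$ does not compensate for it. By comparison, your other acknowledged difficulty, replacing $zJ_0^2(z)$ by its mean $1/\pi$ uniformly, is real but routine.
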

\begin{proof}
	In order to prove the previous result, we need the derivative with respect to $c$ of the differential equation satisfied by the CPSWFs, that is
	$$(1-x^2)\partial_c\left[(\psi_{n,c}^{(\alpha)})''\right]-2x\partial_c\left[(\psi_{n,c}^{(\alpha)})'\right]+\left(\chi_{n}^{\alpha}(c)+\frac{1/4-\alpha^2}{x^2}-c^2x^2\right)\partial_c\left[(\psi_{n,c}^{(\alpha)})\right]+\left(\partial_c\chi_{n}^{\alpha}(c)-2cx^2\right)\psi_{n,c}^{(\alpha)}=0.$$
	We rewrite the last equation in the following form
	\begin{equation}\label{eq0}
	\left(\mathcal{L}_{c}^{\alpha}+\chi_{n}^{\alpha}(c)Id\right)\left(\partial_c(\psi_{n,c}^{(\alpha)})\right)+\left(\partial_c\chi_{n}^{\alpha}(c)-2cx^2\right)\psi_{n,c}^{(\alpha)}=0.
	\end{equation}
	Using the self-adjoint property of $\mathcal{L}_{c}^{\alpha}$ and the fact that $\psi_{n,c}^{(\alpha)}$ are the eigenfunctions of the last, one gets
	$$\scal{\left(\mathcal{L}_{c}^{\alpha}+\chi_{n}^{\alpha}(c)Id\right)\left(\partial_c(\psi_{n,c}^{(\alpha)})\right),\psi_{n,c}^{(\alpha)}}_{L^2(0,1)}=0.$$
	Using \eqref{eq0} and the previous equation, we obtain
	$$\scal{\left(\partial_c\chi_{n}^{(\alpha)}(c)-2cx^2\right)\psi_{n,c}^{(\alpha)},\psi_{n,c}^{(\alpha)}}_{L^2(0,1)}=0.$$	
	Finally, from the fact that $\norm{\psi_{n,c}^{(\alpha)}}_{L^2(0,1)}=1,$ one gets
	\begin{equation}\label{eq1}
	\partial_c\chi_{n}^{\alpha}(c)=2c\int_0^1x^2\left(\psi_{n,c}^{(\alpha)}(x)\right)^2dx.
	\end{equation}
	Let $B=\int_0^1x^2\left(\psi_{n,c}^{(\alpha)}(x)\right)^2dx,$ from the series expansion of $x^2\vp$ in $(T_k^\alpha)$ basis of $L^2(0,1)$ given by
	\begin{equation}\label{e17}
	T_n^{\alpha}(x)=\sqrt{2(2n+\alpha+1)}x^{\alpha+1/2}P_n^{(\alpha,0)}(1-2x^2),
	\end{equation}
	 we get
	\begin{eqnarray*}
	B=\norm{x\vp}^2_{L^2(0,1)}&\geq&\norm{x^2\vp}^2_{L^2(0,1)}=\displaystyle\sum_{k=0}^\infty\scal{x^2\vp,T_k^{\alpha}}^2_{L^2(0,1)}\\&\geq&\scal{x^2\vp,T_n^{\alpha}}^2_{L^2(0,1)}.
	\end{eqnarray*}
By using the three recursion formula satisfied by $T_n^\alpha$ inspired from $\cite{Slepian3}$, such that
\begin{equation*}
x^2T_n^\alpha=\gamma_{n,\alpha}^0T_n^\alpha-(\gamma_{n+1,\alpha}^1T_{n+1}^\alpha+\gamma_{n,\alpha}^{1}T_{n-1}^\alpha)
\end{equation*}
where $\gamma_{n,\alpha}^0=\frac{1}{2}\left[1+\frac{\alpha^2}{(2n+\alpha)(2n+\alpha+2)}\right]$ and $\gamma_{n,\alpha}^1=\frac{n(n+\alpha)}{(2n+\alpha)\sqrt{(2n+\alpha+1)(2n+\alpha-1)}}$ with the $k-$th series expansion coefficient of $\vp$ in $(T_k^\alpha)$ basis denoted by  $d_k^n=\scal{\vp,T_k^{\alpha}}_{L^2(0,1)}$ for $k=n, n+1, n-1,$ we obtain
\begin{eqnarray*}
B&\geq&\scal{x^2\vp,T_n^{\alpha}}^2_{L^2(0,1)}\\&=&\left(\gamma_{n,\alpha}^0d_n^n-(\gamma_{n+1,\alpha}^1d_{n+1}^n+\gamma_{n,\alpha}^{1}d_{n-1}^n)\right)^2.
\end{eqnarray*}
Using the fact that $d_n^n=1-\eta_n^n$ such that $0\leq\eta_n^n\lesssim 1/n^2$, see \cite{Boulsane-Jaming-Souabni}, we rewrite the last inequality under the following form
\begin{eqnarray*}
B&\geq&\left(\gamma_{n,\alpha}^0-(\eta_n^n\gamma_{n,\alpha}^0+\gamma_{n+1,\alpha}^1d_{n+1}^n+\gamma_{n,\alpha}^{1}d_{n-1}^n)\right)^2\\&\geq&\left(\gamma_{n,\alpha}^0\right)^2-2\gamma_{n,\alpha}^0\left|\eta_n^n\gamma_{n,\alpha}^0+\gamma_{n+1,\alpha}^1d_{n+1}^n+\gamma_{n,\alpha}^{1}d_{n-1}^n\right|\\&\geq&\frac{1}{4}-\kappa_n.
\end{eqnarray*}
From the decay rate of $d_k^n$, $k= n-1$ and $k=n+1$ given in \cite{Boulsane-Jaming-Souabni}, we get $\kappa_n=O(1/n).$
    
	Finally, to conclude it suffices to integrate \eqref{eq1} on $(0,c)$ and use the fact that $\chi_{n}^{\alpha}(0)=(2n+\alpha+\frac{1}{2})(2n+\alpha+\frac{3}{2})$. 
\end{proof}
\subsection{Uniform Approximation in terms of Jacobi polynomials}
In order to improve the uniform approximation of the CPSWFs in terms of $T_n^\alpha$ given in \eqref{e17}, we first give the differential equation satisfied by $K_n^\alpha(x)=\frac{T_n^{\alpha}(x)}{x^{\alpha+1/2}}$ which we write
\begin{equation}
\frac{d}{dx}\left[x^{2\alpha+1}(1-x^2)\frac{d}{dx}[K_n^{\alpha}]\right]+\gamma_n^{\alpha}(0)x^{2\alpha+1}K_n^{\alpha}=0,
\end{equation}
where $\gamma_n^\alpha(c)=\chi_n^\alpha(c)+1/4-(\alpha+1)^2$ for $c\geq 0.$ 
We notice that we can rewrite the equation \eqref{e11} satisfied by $\vp$ in the following form
\begin{equation}
\frac{d}{dx}\left[x^{2\alpha+1}(1-x^2)\frac{d}{dx}[V]\right]+\gamma_n^{\alpha}(0)x^{2\alpha+1}V=g(x)x^{2\alpha+1}V,
\end{equation}
where $V=\frac{\vp}{x^{\alpha+1/2}}$ and $g(x)=\gamma_n^{\alpha}(0)-\gamma_n^{\alpha}(c)+c^2x^2$ satisfying $|g(x)|\leq c^2$ for every $x\in (0,1)$.

We denote by $H_n^{\alpha}=\sqrt{(2n+\alpha+1)}Q_n^{(\alpha,0)}(1-2x^2)$ the second linearly independent solution of the previous homogeneous differential equation given by the Jacobi function of the second kind, then through the use of {\bf WKB} method and from the unboundedness of $Q_n^{(\alpha,0)}$ in the neighborhood of $1$, we can check that for every $x\in(0,1)$ the bounded solution of \eqref{e11} satisfies the following equality
\begin{equation}
\vp(x)=A_n^{\alpha}T_n^{\alpha}(x)+x^{\alpha+1/2}\int_0^x\frac{S_n^{\alpha}(x,y)}{\mathcal{W}(K_n^{\alpha},H^{\alpha}_n)(y)}g(y)\vp(y)y^{\alpha+1/2}dy
\end{equation}
where $S_n^{\alpha}(x,y)=K_n^{\alpha}(x)H_n^{\alpha}(y)-K_n^{\alpha}(y)H_n^{\alpha}(x)$ and $\mathcal{W}(K_n^{\alpha},H^{\alpha}_n)$ is the Wronskian given by the following formula from \cite{E},
\begin{equation}
\mathcal{W}(K_n^{\alpha},H^{\alpha}_n)(y)=\frac{-(2n+\alpha+1)}{y^{2\alpha+1}(1-y^2)}.
\end{equation}
Finally, we have the following estimate
\begin{equation}
\vp(x)=A_n^{\alpha}T_n^{\alpha}(x)-\frac{x^{\alpha+1/2}}{2n+\alpha+1}\int_0^xS_n^{\alpha}(x,y)g(y)\vp(y)(1-y^2)y^{3(\alpha+1/2)}dy
\end{equation}
	Using the same techniques given in \cite{Karoui-Bonami2} see also \cite{K.S}, one can easily check that for a fixed $\alpha\geq-1/2$, there exists a constant $C_{\alpha}$ such that  
	\begin{equation}
	y^{2\alpha+2}(1-y^2)\left|S_n^{\alpha}(x,y)\right|\leq C_{\alpha},\,\,~~0\leq y\leq x\leq 1.
	\end{equation}
	By collecting every details together and based on the same techniques given in \cite{Karoui-Bonami2}, we get the following proposition.
\begin{proposition}
	Let $\alpha\geq -1/2$ and $c>0$ then for every integer $n\in\mathbb{N}$, we have 
	\begin{equation}\label{e13}
	\vp(x)=A_n^{\alpha}T_n^{\alpha}(x)+x^{\alpha+1/2}R_{n,c}^{\alpha}(x),\,\,~~x\in(0,1)
	\end{equation}
	where $\left|R_{n,c}^{\alpha}(x)\right|\leq C_{\alpha}\displaystyle\frac{c^2}{2n+\alpha+1}x^{\alpha}$ and $A_n^{\alpha}$ is a constant satisfying the following estimate
	$$\left|A_n^{\alpha}-1\right|\leq\frac{C_{\alpha}}{\sqrt{4\alpha+2}}\frac{c^2}{2n+\alpha+1}.$$
\end{proposition}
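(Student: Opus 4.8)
The plan is to take as the starting point the variation-of-parameters representation already assembled above, namely
\[
\vp(x)=A_n^{\alpha}T_n^{\alpha}(x)-\frac{x^{\alpha+1/2}}{2n+\alpha+1}\int_0^xS_n^{\alpha}(x,y)g(y)\vp(y)(1-y^2)y^{3(\alpha+1/2)}\,dy,
\]
and to read off the remainder term $x^{\alpha+1/2}R_{n,c}^{\alpha}(x)$ directly as the integral on the right, so that
\[
R_{n,c}^{\alpha}(x)=-\frac{1}{2n+\alpha+1}\int_0^xS_n^{\alpha}(x,y)g(y)\vp(y)(1-y^2)y^{3(\alpha+1/2)}\,dy.
\]
Everything then reduces to estimating this single integral with the three ingredients prepared above: the kernel bound $y^{2\alpha+2}(1-y^2)\abs{S_n^{\alpha}(x,y)}\le C_{\alpha}$, the pointwise bound $\abs{g(y)}\le c^2$, and the normalization $\norm{\vp}_{L^2(0,1)}=1$.

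First I would bound $R_{n,c}^{\alpha}$. Factoring $y^{2\alpha+2}(1-y^2)S_n^{\alpha}(x,y)$ out of the integrand absorbs the $(1-y^2)$ factor and leaves the power $y^{3(\alpha+1/2)-(2\alpha+2)}=y^{\alpha-1/2}$, whence
\[
\abs{R_{n,c}^{\alpha}(x)}\le\frac{C_{\alpha}c^2}{2n+\alpha+1}\int_0^x\abs{\vp(y)}\,y^{\alpha-1/2}\,dy.
\]
Applying Cauchy--Schwarz to the last integral and using $\norm{\vp}_{L^2(0,x)}\le\norm{\vp}_{L^2(0,1)}=1$ gives $\int_0^x\abs{\vp(y)}y^{\alpha-1/2}\,dy\le\bigl(\int_0^xy^{2\alpha-1}\,dy\bigr)^{1/2}=x^{\alpha}/\sqrt{2\alpha}$ for $\alpha>0$, which produces exactly the claimed bound $\abs{R_{n,c}^{\alpha}(x)}\le C_{\alpha}\frac{c^2}{2n+\alpha+1}x^{\alpha}$ with an $n$-independent constant. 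For $-1/2\le\alpha\le0$ the weight $y^{2\alpha-1}$ is no longer integrable at the origin, and I would instead invoke the regular behaviour $\vp(y)\lesssim y^{\alpha+1/2}$ of the bounded solution near $0$ to dominate the integrand by $y^{2\alpha}$ and integrate to $x^{2\alpha+1}\le x^{\alpha}$ on $(0,1)$.

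The estimate for $A_n^{\alpha}$ then follows purely from $L^2$ considerations and, pleasingly, reproduces the precise constant. Rewriting \eqref{e13} as $A_n^{\alpha}T_n^{\alpha}=\vp-x^{\alpha+1/2}R_{n,c}^{\alpha}$ and taking $L^2(0,1)$ norms, the orthonormality of the system $(T_n^{\alpha})$ from \eqref{e17} gives $\abs{A_n^{\alpha}}=\norm{\vp-x^{\alpha+1/2}R_{n,c}^{\alpha}}_{L^2(0,1)}$. Inserting the bound just proved,
\[
\norm{x^{\alpha+1/2}R_{n,c}^{\alpha}}_{L^2(0,1)}^2=\int_0^1x^{2\alpha+1}\abs{R_{n,c}^{\alpha}(x)}^2\,dx\le C_{\alpha}^2\frac{c^4}{(2n+\alpha+1)^2}\int_0^1x^{4\alpha+1}\,dx=\frac{C_{\alpha}^2}{4\alpha+2}\frac{c^4}{(2n+\alpha+1)^2},
\]
which is exactly where the factor $1/\sqrt{4\alpha+2}$ originates. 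The reverse triangle inequality together with $\norm{\vp}_{L^2(0,1)}=1$ then yields $\big|\abs{A_n^{\alpha}}-1\big|\le\norm{x^{\alpha+1/2}R_{n,c}^{\alpha}}_{L^2(0,1)}$, and with the sign convention $A_n^{\alpha}>0$ (valid for $n$ large) this is precisely the asserted inequality for $A_n^{\alpha}-1$.

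The main obstacle is not this final assembly but the uniform kernel bound $y^{2\alpha+2}(1-y^2)\abs{S_n^{\alpha}(x,y)}\le C_{\alpha}$, asserted above as ``easily checked'': since $S_n^{\alpha}$ is built from $P_n^{(\alpha,0)}$ and the second-kind Jacobi function $Q_n^{(\alpha,0)}$, controlling it uniformly in $n$ on $\{0\le y\le x\le1\}$ requires uniform asymptotics for these functions, in particular near $y\to0$ where $Q_n^{(\alpha,0)}(1-2y^2)$ carries a logarithmic singularity. A secondary delicate point is the range $-1/2\le\alpha\le0$, where the clean Cauchy--Schwarz step fails and one must control $\vp(y)y^{\alpha-1/2}$ through the regular behaviour of $\vp$ at the origin while keeping all constants independent of $n$.
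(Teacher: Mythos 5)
Your route is the same as the paper's: the paper's ``proof'' of this proposition consists precisely of the preceding derivation of the integral equation, the asserted kernel bound $y^{2\alpha+2}(1-y^2)\abs{S_n^{\alpha}(x,y)}\leq C_{\alpha}$, and an appeal to the techniques of \cite{Karoui-Bonami2}; your Cauchy--Schwarz estimate of the remainder and the argument for $A_n^{\alpha}$ via orthonormality of $(T_n^{\alpha})$ (indeed $\norm{T_n^{\alpha}}_{L^2(0,1)}=1$) plus the reverse triangle inequality are exactly the details the paper leaves to that citation, and for $\alpha>0$ they are correct and recover the precise factor $1/\sqrt{4\alpha+2}$. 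Two of your caveats are shared by the paper itself: the kernel bound is asserted, not proved, there as well, and the sign normalization of $\vp$ (equivalently of $A_n^{\alpha}$) must be fixed to pass from $\big|\abs{A_n^{\alpha}}-1\big|$ to $\abs{A_n^{\alpha}-1}$.

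There is, however, a genuine gap in your treatment of the range $-1/2\leq\alpha\leq 0$. You replace Cauchy--Schwarz by the pointwise domination $\abs{\vp(y)}\lesssim y^{\alpha+1/2}$ with an implicitly $n$-independent constant, but no such uniform bound exists: by \eqref{e17} and the representation \eqref{e13} itself,
\begin{equation*}
\lim_{y\to 0^+}\frac{\vp(y)}{y^{\alpha+1/2}}=A_n^{\alpha}\sqrt{2(2n+\alpha+1)}\,P_n^{(\alpha,0)}(1)+R_{n,c}^{\alpha}(0^+)\asymp n^{\alpha+1/2},
\end{equation*}
which is unbounded in $n$ for every $\alpha>-1/2$ (note that the paper's own \eqref{eq1.0}, which claims $\sim n^{\alpha}$, drops the normalization factor $\sqrt{2(2n+\alpha+1)}$ of $T_n^{\alpha}$). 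Hence the integrand $\abs{\vp(y)}y^{\alpha-1/2}$ cannot be dominated by $C_{\alpha}y^{2\alpha}$ uniformly in $n$, and this branch of your argument collapses as written. A correct treatment of $-1/2\leq\alpha\leq 0$ requires the oscillation envelope of $\vp$ rather than only its leading behaviour at the origin --- e.g.\ an estimate of the type $\abs{\vp(y)}\lesssim\min\set{\big(\sqrt{\chi_n^{\alpha}(c)}\,y\big)^{\alpha+1/2},1}$ coming from the Bessel-type approximation of Section 3.1, used for $y\lesssim\chi_n^{\alpha}(c)^{-1/2}$, combined with Cauchy--Schwarz (or the trivial bound on $y^{\alpha-1/2}$) away from the origin. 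Since the paper's later theorems invoke the proposition only for $\alpha>0$, the gap is confined to this endpoint range, but your proof as proposed does not establish the statement for all $\alpha\geq-1/2$.
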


\section{Quality of approximation by CPSWFs in Hankel Sobolev Space}
In this section, we study the quality of approximation by the series expansion in CPSWFs basis of a function from the Hankel sobolev space. First, we give a description of the decay rate of the series
expansion coefficients of the eigenfunctions $\vp$ in a generalized Jacobi functions basis $\left(T_k^{\alpha}\right)_k$ of $L^2(0,1)$ where will be the connection with the decay of the $\vp-$ series expansion coefficients in $\phi_j^{(\alpha)}$ basis of $L^2(0,1)$, then we conclude in the final subsection by gives the quality of the convergence speed of the truncated series expansion of a function $f$ to $f.$	
\subsection{Local estimate of $d^n_{k}$}
By substitution the series expansion of $\vp$ with respect to the basis $T_k^{\alpha}$ in problem \eqref{differ_operator2}, that is
\begin{equation}
\vp=\sum_{k=0}^{\infty}d_k^n T_k^{\alpha} ~~~~~~~~ d_k^n=\int_0^1T_k^{\alpha}(x)\vp(x)dx,
\end{equation}
 we obtain the three term recurrence relation satisfied by the
coefficients $d_k^n$. More precisely, from \cite{Slepian3}, we have 
\begin{equation}\label{eq3}
f(k,n,c,\alpha)d_k^n=a_{k,\alpha}d_{k-1}^n+a_{k+1,\alpha}d_{k+1}^n,~ \forall k\ge 0,
\end{equation}
where $d_{-1}^{n}=0$ and
\begin{eqnarray}
\label{coeff-CPSWFs}
f(k,n,c,\alpha)&=&\displaystyle\chi_{n}^{\alpha}(c)-(\alpha+2k+\frac{1}{2})(\alpha+2k+\frac{3}{2})-c^2b_{k,\alpha} \\
a_{k,\alpha}&=&\frac{k(k+\alpha)}{(\alpha+2k)\sqrt{\alpha+2k+1}\sqrt{\alpha+2k-1}}c^2 \nonumber \\
b_{k,\alpha}&=&\frac{1}{2}\ent{\frac{\alpha^2}{(\alpha+2k+1)(\alpha+2k)}+1} \nonumber.
\end{eqnarray}
\begin{proposition}
Let $\alpha\geq -1/2$ and $c>0$,
then for every integers $ n\geq 0, k\geq 0$ such that $\chi_{n}^{\alpha}(c)\geq 4(k+1)(k+2+\alpha)+C_{\alpha},$ we have
\begin{equation}
\left|\left(\frac{1}{x}\frac{d}{dx}\right)^k(U)(0)\right|\leq (\chi_{n}^{\alpha}(c))^{k}\left|U(0)\right|.
\end{equation}
where $C_{\alpha}=(\alpha+1/2)(\alpha+3/2)$ and $U(x)=\frac{\vp(x)}{x^{\alpha+1/2}}.$
\end{proposition}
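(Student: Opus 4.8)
The plan is to change variables to $t=x^2$, convert \eqref{differ_operator2} into a hypergeometric-type equation for $U$ viewed as a function of $t$, extract from it a three-term recurrence for the Taylor coefficients of $U$ at $0$, and then run an induction on $k$. Write $\widetilde U(t)=U(\sqrt t)=\vp(\sqrt t)\,(\sqrt t)^{-(\alpha+1/2)}$; since $\frac1x\frac{d}{dx}$ acts on functions of $t$ as $2\frac{d}{dt}$, one has $\left(\frac1x\frac{d}{dx}\right)^{(k)}(U)(0)=2^k\,\widetilde U^{(k)}(0)$ and $U(0)=\widetilde U(0)$. Setting $u_k=\widetilde U^{(k)}(0)$ and $\chi=\chi_n^{\alpha}(c)$, the claim is equivalent to $|u_k|\le(\chi/2)^k|u_0|$. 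Note $\widetilde U$ is analytic at $0$: from $\vp=\sum_j d_j^n T_j^{\alpha}$ and \eqref{e17}, $U$ is a series in the $P_j^{(\alpha,0)}(1-2x^2)$, hence a genuine power series in $t$.

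First I would record the equation satisfied by $\widetilde U$. Starting from the equation for $V=\vp/x^{\alpha+1/2}$ obtained in Section 3.2, namely $\frac{d}{dx}\left[x^{2\alpha+1}(1-x^2)V'\right]+\left(\gamma_n^{\alpha}(c)-c^2x^2\right)x^{2\alpha+1}V=0$, the substitution $t=x^2$ yields
\[
t(1-t)\,\widetilde U''+\left[(\alpha+1)-(\alpha+2)t\right]\widetilde U'+\tfrac14\left(\gamma_n^{\alpha}(c)-c^2t\right)\widetilde U=0,
\]
where $\gamma_n^{\alpha}(c)=\chi+\tfrac14-(\alpha+1)^2=\chi-C_\alpha$ because $(\alpha+1)^2-\tfrac14=C_\alpha$; at $c=0$ this is exactly the Jacobi equation solved by $P_n^{(\alpha,0)}(1-2t)$, which is a useful check. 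Differentiating this equation $k$ times by the Leibniz rule and evaluating at $t=0$ — only the finitely many terms coming from the first two, one and one nonvanishing derivatives of the three polynomial coefficients survive — gives the three-term recurrence
\[
(k+\alpha+1)\,u_{k+1}=\left(k(k+\alpha+1)-\tfrac14\gamma_n^{\alpha}(c)\right)u_k+\tfrac14kc^2\,u_{k-1},\qquad u_{-1}=0.
\]
I abbreviate $A=k+\alpha+1>0$.

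The induction then proceeds as follows. Using $4(k+1)(k+2+\alpha)=4kA+4(k+2+\alpha)$, the hypothesis reads $\chi\ge 4kA+4(k+2+\alpha)+C_\alpha$; as its right-hand side increases with $k$, the hypothesis is inherited by every smaller index, which is what makes the induction legitimate. Moreover it gives $\tfrac14\gamma_n^{\alpha}(c)-kA=\tfrac{\chi-C_\alpha}{4}-kA\ge k+2+\alpha\ge 0$, so $kA-\tfrac14\gamma_n^{\alpha}(c)\le 0$ and $\left|kA-\tfrac14\gamma_n^{\alpha}(c)\right|=\tfrac{\chi-C_\alpha}{4}-kA$. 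Assuming $|u_j|\le(\chi/2)^j|u_0|$ for all $j\le k$ and inserting these bounds into the recurrence, the target inequality $|u_{k+1}|\le(\chi/2)^{k+1}|u_0|$ reduces, after simplifying, to
\[
2\left|kA-\tfrac14\gamma_n^{\alpha}(c)\right|+kq\le A\chi,\qquad q=\frac{c^2}{\chi}<1,
\]
the bound $q<1$ being the one noted after \eqref{e11}. Replacing the absolute value by $\tfrac{\chi-C_\alpha}{4}-kA$, this is equivalent to $2kq\le(2A-1)\chi+C_\alpha+4kA$, which holds since $2kq<2k\le 4kA$ (because $A\ge\tfrac12$) and $(2A-1)\chi+C_\alpha\ge 0$ for $\alpha\ge-1/2$. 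The base case $k=0$ is the trivial $|u_0|\le|u_0|$.

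The computations are essentially mechanical; the single delicate point — and the only place where the precise threshold $4(k+1)(k+2+\alpha)+C_\alpha$ and the value $C_\alpha=(\alpha+1/2)(\alpha+3/2)$ are actually used — is the sign determination $kA-\tfrac14\gamma_n^{\alpha}(c)\le 0$, which removes the absolute value and collapses the inductive step to a manifestly true inequality. The other thing to watch is the monotonicity in $k$ of the threshold, needed so that the induction hypothesis is available at all lower indices simultaneously.
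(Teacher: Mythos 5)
Your proof is correct and takes essentially the same route as the paper: under the substitution $t=x^2$ your three-term recurrence for the Taylor coefficients $u_k$ is precisely the paper's recurrence $\beta_k^{\alpha}U_{k+2}=\gamma_k^{\alpha}U_{k+1}+\delta_k^{\alpha}U_{k}$ for $U_k=\left(\frac{1}{x}\frac{d}{dx}\right)^k(U)(0)=2^k u_k$, and your induction establishing $|u_k|\le(\chi/2)^k|u_0|$ is the paper's induction $m_k=|U_k|/\chi^k\le m_0$, the only differences being cosmetic: you use the triangle inequality where the paper first proves the sign alternation $U_{k+1}U_k<0$, and you actually carry out the final induction that the paper leaves unwritten. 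The one blemish is the claimed identity $4(k+1)(k+2+\alpha)=4kA+4(k+2+\alpha)$, which is false (the correct expansion is $4kA+4(2k+2+\alpha)$), but this is harmless since you only use the weaker consequence $\chi\ge 4kA+4(k+2+\alpha)+C_\alpha$ of the hypothesis, which still yields the key sign determination $kA-\frac{1}{4}\gamma_n^{\alpha}(c)\le 0$ and everything downstream.
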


\begin{proof}
To simplify the expressions, we denote by $\psi=\psi_{n,c}^{\alpha}$ and $\chi=\chi_{n,c}^{(\alpha)}$. Using \eqref{differ_operator2} and a straightforward computation, we can easily check that $U(x)=\frac{\psi(x)}{x^{\alpha+1/2}}$ satisfies the following differential equation
\begin{equation}
x^2(1-x^2)\left(\frac{1}{x}\frac{d}{dx}\right)^2(U)(x)=f_1(x)\left(\frac{1}{x}\frac{d}{dx}\right)(U)(x)+f_2(x)U(x)~~~~~ x\in(0,1)
\end{equation}
where $$f_1(x)=2\left((2+\alpha)x^2-(1+\alpha)\right),\,\,f_2(x)=\left(c^2x^2+C_{\alpha}-\chi\right)$$ and $C_{\alpha}=\alpha^2+2\alpha+\frac{3}{4}.$
We denote by $U_k=\left(\frac{1}{x}\frac{d}{dx}\right)^k(U)(0),$ then by applying the operator $\left(\frac{1}{x}\frac{d}{dx}\right)$ to the last equation successively and since $\left(\frac{1}{x}\frac{d}{dx}\right)^j(f_1)=\left(\frac{1}{x}\frac{d}{dx}\right)^j(f_2)=0$ for every $j\geq 2$, $U_k$ satisfies the following three terms recursion formula, for every $k\geq 0$
$$\beta_k^{\alpha}U_{k+2}=\gamma_k^{\alpha}U_{k+1}+\delta_k^{\alpha}U_{k}$$ where
\begin{eqnarray*}
\beta_k^{\alpha}&=& 2(k+2+\alpha).\\
\gamma_k^{\alpha}&=&4(k+1)(k+2+\alpha)+C_{\alpha}-\chi.\\
\delta_k^{\alpha}&=&2c^2(k+1).
\end{eqnarray*}
By induction, it's easy to check that for every $k\geq 0$,
$U_{k+1}U_k<0,$ which gives us the following relation:
$$|U_{k+2}|=\frac{-\gamma_k^{\alpha}}{\beta_k^{\alpha}}|U_{k+1}|+\frac{\delta_k^{\alpha}}{\beta_k^{\alpha}}|U_{k}|.$$
It follows that
$$m_{k+2}=\frac{1}{\beta_{k}^{\alpha}}\left((1-\frac{4(k+1)(k+2+\alpha)+C_{\alpha}}{\chi})m_{k+1}+\frac{2q}{\chi}(k+1)m_{k}\right),$$

where $m_k=\frac{|U_k|}{\chi^k}$ and $q=\frac{c^2}{\chi}.$\\
It remains to prove the following result which is done by induction on $k\in\mathbb{N}$. 
For every $k\geq 0,$ we have, $m_k\leq m_0$.
\end{proof}
\begin{corollary}
	Let $c>0$ and $\alpha\geq-1/2$, then for every $n,k\geq 0$ satisfying the same condition of the previous proposition, we have
	\begin{equation}\label{eq6}
0\leq 	\int_0^1t^{2k+\alpha+1/2}\vp(t)dt\leq \frac{2^{\alpha+k}\Gamma(\alpha+k+1)}{c^{\alpha+1/2}}\frac{|\mu_n^{\alpha}(c)|}{q^k}|U(0)|,
	\end{equation}
	where $U(0)=\displaystyle\lim_{x\to 0^+}\left(\frac{\psi_{n,c}^{\alpha}(x)}{x^{\alpha+1/2}}\right)$ and $q=\frac{c^2}{\chi_n^{\alpha}(c)}$.
\end{corollary}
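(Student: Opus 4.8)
The plan is to recognize the integral $\int_0^1 t^{2k+\alpha+1/2}\vp(t)\,dt$ as, up to an explicit constant and one power of $\mu_n^{\alpha}(c)$, precisely the iterated radial derivative $U_k:=\left(\frac1x\frac{d}{dx}\right)^{k}(U)(0)$ already controlled by the previous proposition, where $U=\vp/x^{\alpha+1/2}$. The eigenvalue equation \eqref{eq5} is the bridge: differentiating it $k$ times with the operator $\frac1x\frac{d}{dx}$ and evaluating at the origin converts a derivative of $\vp$ into the moment of $\vp$ against the weight $t^{2k+\alpha+1/2}$.

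First I would start from the representation obtained in the preliminaries by applying $\left(\frac1x\frac{d}{dx}\right)^{k}$ to \eqref{eq5} together with the identity $\left(\frac1x\frac{d}{dx}\right)^{k}\!\left(\frac{J_{\alpha}(cxy)}{(cxy)^{\alpha}}\right)=(-1)^k(cy)^{2k}\frac{J_{\alpha+k}(cxy)}{(cxy)^{\alpha+k}}$ from \cite{O.L.B.C}, namely
$$\left(\frac1x\frac{d}{dx}\right)^{k}\!\left(x^{-(\alpha+1/2)}\vp\right)(x)=\frac{(-1)^k c^{\alpha+1/2}}{\mu_n^{\alpha}(c)}\int_0^1 (cy)^{2k}\,\frac{J_{\alpha+k}(cxy)}{(cxy)^{\alpha+k}}\,y^{\alpha+1/2}\vp(y)\,dy.$$
Then I would pass to the limit $x\to 0^+$. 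Since $z\mapsto J_{\alpha+k}(z)/z^{\alpha+k}$ is bounded on $[0,\infty)$ and satisfies $J_{\alpha+k}(z)/z^{\alpha+k}\to 1/\bigl(2^{\alpha+k}\Gamma(\alpha+k+1)\bigr)$ as $z\to 0$, the integrand is dominated by the fixed integrable function $c^{2k}y^{2k+\alpha+1/2}|\vp(y)|$, so dominated convergence yields the exact identity
$$U_k=\frac{(-1)^k c^{\alpha+1/2}c^{2k}}{\mu_n^{\alpha}(c)\,2^{\alpha+k}\Gamma(\alpha+k+1)}\int_0^1 t^{2k+\alpha+1/2}\vp(t)\,dt.$$

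For the upper bound I would solve this identity for the integral, take absolute values, and insert the estimate $|U_k|\le(\chi_n^{\alpha}(c))^{k}|U(0)|$ of the previous proposition (legitimate precisely under the stated hypothesis $\chi_n^{\alpha}(c)\ge 4(k+1)(k+2+\alpha)+C_\alpha$); writing $q=c^2/\chi_n^{\alpha}(c)$ gives $(\chi_n^{\alpha}(c))^{k}/c^{2k}=q^{-k}$, which collapses the constants to exactly $\frac{2^{\alpha+k}\Gamma(\alpha+k+1)}{c^{\alpha+1/2}}\frac{|\mu_n^{\alpha}(c)|}{q^k}|U(0)|$, the claimed bound. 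For the nonnegativity I would use the fact, established inside the proof of the previous proposition, that consecutive derivatives alternate in sign, $U_{k+1}U_k<0$; this forces $(-1)^kU_k=\mathrm{sgn}(U(0))\,|U_k|$, so the displayed identity shows the integral has the sign of $\mu_n^{\alpha}(c)\,U(0)$. The $k=0$ case of the identity reads $\mu_n^{\alpha}(c)\,U(0)=\frac{c^{\alpha+1/2}}{2^{\alpha}\Gamma(\alpha+1)}\int_0^1 t^{\alpha+1/2}\vp(t)\,dt$, and with the standard normalization of the CPSWFs this product is $\ge 0$, hence so is the integral for every admissible $k$.

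The limiting computation is routine, so the main obstacle is the sign bookkeeping for the lower bound: one must check that the normalization of $\vp$ fixes $\mathrm{sgn}(U(0))$ consistently, that the alternation $U_{k+1}U_k<0$ holds for all $k$ in the admissible range, and that the factor $(-1)^k$ coming from the Bessel derivative identity is exactly cancelled by the sign of $U_k$, leaving a nonnegative right-hand side. Everything else is a direct substitution into the previous proposition.
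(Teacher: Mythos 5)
Your proposal is correct and follows essentially the same route as the paper: apply $\left(\frac{1}{x}\frac{d}{dx}\right)^k$ to the eigenvalue relation \eqref{eq5}, use the identity $\left(\frac{1}{x}\frac{d}{dx}\right)^{k}\left(\frac{J_{\alpha}(cxy)}{(cxy)^{\alpha}}\right)=(-1)^k(cy)^{2k}\frac{J_{\alpha+k}(cxy)}{(cxy)^{\alpha+k}}$ together with the small-argument behaviour of $J_{\alpha+k}$, let $x\to 0^+$, and insert the bound $|U_k|\leq(\chi_n^{\alpha}(c))^k|U(0)|$ from the preceding proposition. The only difference is that you make explicit the sign bookkeeping (via the alternation $U_{k+1}U_k<0$ and the normalization fixing $\mathrm{sgn}(\mu_n^{\alpha}(c)U(0))$), which the paper's proof leaves entirely implicit.
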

\begin{proof}
	At first, from \eqref{eq5} we have 
	\begin{eqnarray*}\label{eq2}
		\int_0^1\frac{J_{\alpha}(cxy)}{(cxy)^{\alpha}}y^{\alpha+1/2}\psi_{n,c}^{\alpha}(y)dy&=&\frac{1}{(cx)^{\alpha+1/2}}\mathcal{H}_c^{\alpha}(\psi_{n,c}^{\alpha})(x)\\&=&\mu_n^{\alpha}(c)\frac{\psi_{n,c}^{\alpha}(x)}{(cx)^{\alpha+1/2}}.
	\end{eqnarray*}
	Then, by applying the operator $\left(\frac{1}{x}\frac{d}{dx}\right)^k$ to the last equation, we get
	\begin{equation}
	\int_0^1\left(\frac{1}{x}\frac{d}{dx}\right)^k\left(\frac{J_{\alpha}(cxy)}{(cxy)^{\alpha}}\right)y^{\alpha+1/2}\psi_{n,c}^{\alpha}(y)dy=\mu_n^{\alpha}(c)\left(\frac{1}{x}\frac{d}{dx}\right)^k\left(\frac{\psi_{n,c}^{\alpha}(x)}{(cx)^{\alpha+1/2}}\right).
	\end{equation}
	From \cite{O.L.B.C}, we have  $J_{\alpha+k}(x)=\displaystyle\frac{x^{\alpha+k}}{2^{\alpha+k}\Gamma(\alpha+k+1)}+O(x^{\alpha+k+2})$ in the neighbourhood of zero. Furthermore, from the same source, we have 
	$$\left(\frac{1}{x}\frac{d}{dx}\right)^k\left(\frac{J_{\alpha}(cxy)}{(cxy)^{\alpha}}\right)=(-1)^k(cy)^{2k}\frac{J_{\alpha+k}(cxy)}{(cxy)^{\alpha+k}}.$$ 
	By combining the two last estimates and used in \eqref{eq2} and by using the previous proposition, one gets the desired result \eqref{eq6}.
\end{proof}
\begin{lemma}
Let $c>0,$  $\alpha\geq -1/2$, then for every $n, k\geq 0$ such that $\chi_{n}^{\alpha}(c)\geq(\alpha+2k+\frac{1}{2})(\alpha+2k+\frac{3}{2})+\left(\frac{2\sqrt{\alpha+1}}{(\alpha+2)\sqrt{\alpha+3}}+b_{0,\alpha}\right)c^2$, then  $d_k^n$ and  $d_0^n$ have the same sign .	
	\end{lemma}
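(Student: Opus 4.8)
The plan is to read the three–term recurrence \eqref{eq3} as a forward recursion in $k$ and to run a ratio (continued–fraction) argument started from the bottom index. First I would dispose of the degenerate case: if $d_0^n=0$, then since $a_{0,\alpha}=0$ and $d_{-1}^n=0$, relation \eqref{eq3} at $k=0$ forces $a_{1,\alpha}d_1^n=0$, hence $d_1^n=0$, and solving \eqref{eq3} forward inductively gives $d_k^n=0$ for every $k$, contradicting $\vp=\sum_k d_k^n T_k^{\alpha}\not\equiv 0$. Normalising, I may therefore assume $d_0^n>0$, so that it suffices to prove $d_j^n>0$ for all $0\le j\le k$. Throughout I write $f(j):=f(j,n,c,\alpha)$ and recall from \eqref{coeff-CPSWFs} that $a_{0,\alpha}=0$, that $a_{j,\alpha}>0$ for $j\ge 1$, that $b_{j,\alpha}=\tfrac12+\tfrac{\alpha^2}{2(\alpha+2j+1)(\alpha+2j)}$ is decreasing in $j$, and that $2a_{1,\alpha}=\frac{2\sqrt{\alpha+1}}{(\alpha+2)\sqrt{\alpha+3}}c^2$ is precisely the constant occurring in the hypothesis.

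The analytic heart of the proof is a diagonal–dominance estimate: for every $0\le j\le k$,
\[
f(j)\ \ge\ a_{j,\alpha}+a_{j+1,\alpha}.
\]
To establish it I would use that $j\mapsto(\alpha+2j+\tfrac12)(\alpha+2j+\tfrac32)$ is increasing, so the hypothesis (stated at the index $k$) transfers to every $j\le k$ and gives $f(j)=\chi_n^{\alpha}(c)-(\alpha+2j+\tfrac12)(\alpha+2j+\tfrac32)-c^2 b_{j,\alpha}\ge 2a_{1,\alpha}+(b_{0,\alpha}-b_{j,\alpha})c^2$. It then remains to verify $a_{j,\alpha}+a_{j+1,\alpha}\le 2a_{1,\alpha}+(b_{0,\alpha}-b_{j,\alpha})c^2$, which I would deduce from the elementary monotonicity and limiting properties of the explicit sequence $a_{j,\alpha}/c^2=\frac{j(j+\alpha)}{(\alpha+2j)\sqrt{(\alpha+2j)^2-1}}$ (controlled by its value at $j=1$ together with its limit $\tfrac14$) and of $b_{j,\alpha}$. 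I expect this inequality to be the main obstacle: the sequence $a_{j,\alpha}$ is not monotone for all $\alpha$, so a naive bound fails, and one must retain the gain $(b_{0,\alpha}-b_{j,\alpha})c^2$ in order to close the estimate when $\alpha$ is large.

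Granting the diagonal dominance, I would conclude by a ratio induction. Set $v_j:=a_{j,\alpha}\,d_{j-1}^n/d_j^n$ (well defined once $d_j^n\neq 0$); then $v_0=0$, and rewriting \eqref{eq3} as $a_{j+1,\alpha}d_{j+1}^n=d_j^n\bigl(f(j)-v_j\bigr)$ yields the recursion $v_{j+1}=\dfrac{a_{j+1,\alpha}^2}{f(j)-v_j}$. I claim that $d_j^n>0$ and $0\le v_j\le a_{j,\alpha}$ for all $0\le j\le k$, proved by induction. The base case is $v_0=0=a_{0,\alpha}$ with $d_0^n>0$. For the inductive step, the hypothesis $v_j\le a_{j,\alpha}$ together with diagonal dominance gives $f(j)-v_j\ge f(j)-a_{j,\alpha}\ge a_{j+1,\alpha}>0$, whence $d_{j+1}^n=d_j^n\bigl(f(j)-v_j\bigr)/a_{j+1,\alpha}>0$, and moreover $v_{j+1}=a_{j+1,\alpha}^2/(f(j)-v_j)\le a_{j+1,\alpha}^2/a_{j+1,\alpha}=a_{j+1,\alpha}$, which closes the induction. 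Therefore $d_j^n>0$ for every $0\le j\le k$, i.e. each such $d_j^n$ has the sign of $d_0^n$, as asserted.
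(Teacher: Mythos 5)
Your degenerate-case argument and your continued-fraction induction are both correct, and the induction is really the paper's own mechanism in different notation: the invariant $0\le v_j\le a_{j,\alpha}$ together with $f(j)\ge a_{j,\alpha}+a_{j+1,\alpha}$ gives $a_{j+1,\alpha}d_{j+1}^n=\bigl(f(j)-v_j\bigr)d_j^n\ge a_{j+1,\alpha}d_j^n$, which is exactly the monotonicity $d_{j+1}^n\ge d_j^n$ that the paper propagates from \eqref{eq3}. So both proofs stand or fall with the same diagonal-dominance estimate, and the genuine gap in your proposal is that you never prove it: after the (correct) transfer of the hypothesis from the index $k$ down to $j\le k$, the entire lemma reduces to
\[
a_{j,\alpha}+a_{j+1,\alpha}\ \le\ 2a_{1,\alpha}+\bigl(b_{0,\alpha}-b_{j,\alpha}\bigr)c^2,
\qquad 1\le j\le k,
\]
which you only promise to deduce from ``elementary monotonicity and limiting properties'', while noting yourself that $a_{j,\alpha}$ is not monotone in $j$. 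This inequality is not a routine verification; it is the whole analytic content of the lemma, and the only place where the constant $\frac{2\sqrt{\alpha+1}}{(\alpha+2)\sqrt{\alpha+3}}+b_{0,\alpha}$ appearing in the hypothesis is used. As written, the proposal is a reduction of the lemma to an unproved estimate, not a proof.

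Moreover, that estimate is simply false on part of the lemma's stated range. Your premise that $b_{j,\alpha}$ is decreasing in $j$ fails for $-\frac{1}{2}\le\alpha<0$: there $b_{0,\alpha}=\frac{2\alpha+1}{2(\alpha+1)}<\frac{1}{2}\le b_{j,\alpha}$ for every $j\ge1$, so the ``gain'' $(b_{0,\alpha}-b_{j,\alpha})c^2$ is in fact a penalty of order $c^2$. Concretely, for $\alpha=-\frac{1}{2}$ and $j=1$ one has $a_{1,\alpha}\approx 0.298\,c^2$, $a_{2,\alpha}\approx 0.256\,c^2$, $b_{0,\alpha}=0$ and $b_{1,\alpha}=\frac{8}{15}$, so the displayed inequality asks for $0.554\,c^2\le 0.063\,c^2$; the $c$-independent surplus $(\alpha+2k+\frac{1}{2})(\alpha+2k+\frac{3}{2})-(\alpha+2j+\frac{1}{2})(\alpha+2j+\frac{3}{2})$ that you discarded in the transfer cannot absorb a deficit of order $c^2$ once $c^2\gg k^2$, a regime in which the lemma's hypothesis is perfectly satisfiable (take $n$ large). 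Even on $\alpha\ge0$, where the estimate does appear to be true, your sketch cannot produce it for $\alpha>1/\sqrt{2}$: setting $s=\alpha/(\alpha+2j)$, both sides equal $\frac{1-s^2}{2}\,c^2+O(c^2/\alpha)$ when $j\asymp\alpha$, so the inequality lives entirely in the $O(c^2/\alpha)$ correction and cannot follow from ``the value at $j=1$ together with the limit $\frac{1}{4}$''. Your route does close for $0\le\alpha\le1/\sqrt{2}$, where both sequences are monotone; for fairness, that is also essentially the only range on which the paper's own step \eqref{eq4} (which silently uses $a_{k,\alpha},a_{k+1,\alpha}\le a_{1,\alpha}$ and $b_{k,\alpha}\le b_{0,\alpha}$) is fully justified. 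Your diagnosis of where the naive bounds break is sharp, but the estimate that would repair them is supplied neither in your proposal nor in the paper.
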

\begin{proof}
We recall that from \eqref{eq3}, we have $$d_1^n=\frac{(\alpha+2)\sqrt{\alpha+3}}{c^2\sqrt{\alpha+1}}\left(\chi_{n}^{\alpha}(c)-(\alpha+\frac{1}{2})(\alpha+\frac{3}{2})-c^2b_{0,\alpha}\right)d_0^n.$$
Without loss of the generality, we assume that $d_0^n\geq 0$ then $d_1^n\geq d_0^n$. We assume that for a fixed $k\geq 1$ and for every $1\leq j\leq k,$ $d_{j}^n\geq d_{j-1}^n$ then, from \eqref{eq3}, we have 
\begin{equation}\label{eq4}
c^2\frac{\sqrt{\alpha+1}}{(\alpha+2)\sqrt{\alpha+3}}\left(d_{k-1}^n+d_{k+1}^n\right)\geq \left(\displaystyle\chi_{n}^{\alpha}(c)-(\alpha+2k+\frac{1}{2})(\alpha+2k+\frac{3}{2})-c^2b_{0,\alpha}\right)d_k^n.
\end{equation}
If we suppose that $d_{k+1}^n\leq d_{k}^n$, we end up with a contradiction from \eqref{eq4} with our condition. Hence, we conclude that the suitable $(d_k^n)_k$ is increasing, which imply that $d_k^n\geq 0.$
In the similar manners, we can prove that if $d_0^n\leq 0,$ then the sequence $(d_k^n)_k$ is decreasing and we get $d_k^n\leq 0.$
\end{proof}
\begin{theorem}
	Let $c>0$ and $\alpha>0$ two real numbers, then under the same previous conditions on $n$ and $j$ where $j\leq n,$ we have
	\begin{equation}\label{eq0.6}
\left|d_j^n\right|\leq \frac{C_{\alpha}}{\sqrt{2(2j+\alpha+1)}}\left(\frac{8}{q}\right)^{j}\Gamma(\alpha+j+1)n^{\alpha}|\mu_n^{\alpha}(c)|.	
	\end{equation} 
\end{theorem}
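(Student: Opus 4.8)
The goal is to bound the Jacobi-type coefficient $d_j^n = \scal{\vp, T_j^\alpha}_{L^2(0,1)}$. The natural strategy is to reconnect $d_j^n$ to an integral quantity that we have already controlled, namely the moment integral $\int_0^1 t^{2k+\alpha+1/2}\vp(t)\,dt$ bounded in Corollary 1 (equation \eqref{eq6}). The bridge is the explicit form $T_j^\alpha(x)=\sqrt{2(2j+\alpha+1)}\,x^{\alpha+1/2}P_j^{(\alpha,0)}(1-2x^2)$ from \eqref{e17}. First I would write
\begin{equation*}
d_j^n=\sqrt{2(2j+\alpha+1)}\int_0^1 x^{\alpha+1/2}P_j^{(\alpha,0)}(1-2x^2)\,\vp(x)\,dx,
\end{equation*}
and then expand the Jacobi polynomial $P_j^{(\alpha,0)}(1-2x^2)$ in the monomials $x^{2k}$, $0\le k\le j$. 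This turns $d_j^n$ into a finite linear combination $\sum_{k=0}^{j} c_{j,k}\int_0^1 x^{2k+\alpha+1/2}\vp(x)\,dx$ of exactly the moments estimated in the Corollary.

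**Key steps.** The main work is bookkeeping on the coefficients $c_{j,k}$ of the standard expansion $P_j^{(\alpha,0)}(1-2x^2)=\sum_{k=0}^j c_{j,k}x^{2k}$. Using the hypergeometric representation of $P_j^{(\alpha,0)}$ one gets $c_{j,k}=(-1)^k\binom{j}{k}\frac{\Gamma(j+k+\alpha+1)}{\Gamma(k+\alpha+1)\,j!}$, whose modulus is controlled by a factor growing like $\binom{j}{k}$ times a ratio of Gamma values. Substituting the Corollary bound
\begin{equation*}
0\le\int_0^1 t^{2k+\alpha+1/2}\vp(t)\,dt\le\frac{2^{\alpha+k}\Gamma(\alpha+k+1)}{c^{\alpha+1/2}}\frac{|\mu_n^\alpha(c)|}{q^k}|U(0)|
\end{equation*}
into the finite sum, the $\Gamma(\alpha+k+1)$ in the numerator of the moment bound cancels the $\Gamma(k+\alpha+1)$ in the denominator of $c_{j,k}$, leaving a clean sum. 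The $q^{-k}$ factors are the source of the $(8/q)^j$ in the final bound: after pulling out the prefactor and summing $\sum_{k}\binom{j}{k}2^k q^{-k}\Gamma(j+k+\alpha+1)/(j!\,\Gamma(\alpha+j+1))$, one dominates the Gamma ratio by a constant times $n^\alpha$ (since $j\le n$) and collapses the binomial-type sum into something $\le C(8/q)^j$, absorbing the $2^{\alpha+k}$ factor and the geometric growth. A final bound on $|U(0)|$ — controlled through the uniform estimates of Section 3 together with the normalization $\norm{\vp}_{L^2(0,1)}=1$ — is folded into the constant $C_\alpha$.

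**Main obstacle.** The delicate point is not any single estimate but verifying that the cancellations and the accumulation of geometric factors really produce the stated $(8/q)^j$ with the $\frac{1}{\sqrt{2(2j+\alpha+1)}}$ and $\Gamma(\alpha+j+1)n^\alpha$ dependence, rather than something with an extra $j!$ or a worse base than $8$. The tension is that the moment bound alone grows like $q^{-k}$ per term while the Jacobi coefficients grow combinatorially in $k$; one must check that summing over $0\le k\le j$ does not degrade $q^{-j}$ into $q^{-j}\cdot(\text{extra } j\text{-dependence})$ beyond the harmless polynomial and Gamma factors. I expect that bounding $\sum_{k=0}^j\binom{j}{k}(2/q)^k$ by $(1+2/q)^j\le(3/q)^j$ (for $q\le1$) and then the Gamma ratio by $(2j)^{j}$-type growth absorbed via $\Gamma(\alpha+j+1)n^\alpha$ is what forces the slightly generous base $8$; confirming the inequalities hold uniformly under the standing hypothesis $\chi_n^\alpha(c)\ge 4(k+1)(k+2+\alpha)+C_\alpha$ (which guarantees the Corollary and Proposition 2 apply for all relevant $k\le j\le n$) is the part requiring the most care.
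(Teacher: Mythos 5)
Your route is genuinely different from the paper's, and the difference is exactly where the stated constants come from. The paper expands in the \emph{opposite} direction: instead of writing $P_j^{(\alpha,0)}(1-2x^2)$ in monomials, it expands the monomial in the Jacobi basis, $t^{2j+\alpha+1/2}=\sum_{k=0}^{j}\alpha_{jk}T_k^{\alpha}(t)$, where the coefficients $\alpha_{jk}$ are \emph{nonnegative}. Combined with the preceding Lemma (all $d_k^n$ share the sign of $d_0^n$ under the standing condition on $\chi_n^{\alpha}(c)$ — a lemma your proposal never invokes, although it is the engine of the paper's proof), this gives the one-term bound
\begin{equation*}
d_j^n\;\le\;\frac{1}{\alpha_{jj}}\sum_{k=0}^{j}\alpha_{jk}d_k^n\;=\;\frac{1}{\alpha_{jj}}\int_0^1t^{2j+\alpha+1/2}\vp(t)\,dt ,
\end{equation*}
so only the single $2j$-th moment from the Corollary is needed and no sign cancellation is discarded. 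Since $\alpha_{jj}=\sqrt{2(2j+\alpha+1)}\,\frac{j!\,\Gamma(j+\alpha+1)}{2\Gamma(2j+\alpha+1)}$, Batir's inequality gives $1/\alpha_{jj}\le C_{\alpha}2^{2j}/\sqrt{2(2j+\alpha+1)}$, and the product with the moment bound \eqref{eq6} yields exactly the prefactor $1/\sqrt{2(2j+\alpha+1)}$ and the base $4\cdot 2/q=8/q$ of the statement.

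The concrete gap in your version is quantitative but real: carried out as described, it does not prove the stated inequality. First, your prefactor is $\sqrt{2(2j+\alpha+1)}$ (from $T_j^{\alpha}=\sqrt{2(2j+\alpha+1)}\,x^{\alpha+1/2}P_j^{(\alpha,0)}(1-2x^2)$) sitting in the \emph{numerator}, whereas the theorem has it in the denominator; this loss of a factor $2(2j+\alpha+1)$ cannot be absorbed into $C_{\alpha}$ while keeping the base equal to $8$. Second, the estimates you actually propose — bounding the Gamma ratio by its largest value, $\Gamma(j+k+\alpha+1)/j!\le\Gamma(2j+\alpha+1)/j!\le C_{\alpha}4^{j}\Gamma(j+\alpha+1)$, and then $\sum_{k}\binom{j}{k}(2/q)^{k}\le(3/q)^{j}$ — multiply to give base $12$, not $8$. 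Within your approach one can recover base $8$ by observing that consecutive terms of $\sum_{k}\binom{j}{k}\Gamma(j+k+\alpha+1)(2/q)^{k}$ grow by a ratio at least $2$, so the sum is at most twice its last term $\Gamma(2j+\alpha+1)(2/q)^{j}$; but even then your argument proves $|d_j^n|\le C_{\alpha}\sqrt{2(2j+\alpha+1)}\,(8/q)^{j}\Gamma(\alpha+j+1)n^{\alpha}|\mu_n^{\alpha}(c)|$, weaker than the theorem by a factor of order $j$ (still adequate, incidentally, for the lemma where the theorem is later used). Finally, a smaller but genuine slip: the factor $n^{\alpha}$ is not a bound on any Gamma ratio, nor can $|U(0)|$ be ``folded into the constant'' — it is precisely $|U(0)|\sim n^{\alpha}$, via \eqref{eq1.0}, that produces the $n^{\alpha}$ in the statement.
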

\begin{proof}
	We first give an explicit expression of the 2j-th order moment of $t^{\alpha+1/2}T_j^{\alpha}$ from \cite{O.L.B.C}, that is
	\begin{eqnarray*}
	\alpha_{jj}&=&\int_0^1t^{2j+\alpha+1/2}T_j^{\alpha}(t)dt=\sqrt{2(2j+\alpha+1)}\frac{j!\,\Gamma(j+\alpha+1)}{2\Gamma(2j+\alpha+1)}.
	\end{eqnarray*}
Then, by using Batir inequality given in \cite{B.N}, we obtain
\begin{eqnarray}\label{eq0.7}
\frac{1}{\alpha_{jj}}&=&\frac{1}{\sqrt{2(2j+\alpha+1)}}\frac{2\Gamma(2j+\alpha+1)}{j!\,\Gamma(j+\alpha+1)}\\&\leq&C_{\alpha}\frac{2^{2j}}{\sqrt{2(2j+\alpha+1)}}.\nonumber
\end{eqnarray}
	On the other hand, from the following expansion of $t^{2j+\alpha+1/2}$ in $T_k^{\alpha}$ basis of $L^2(0,1)$ given by $t^{2j+\alpha+1/2}=\displaystyle\sum_{k=0}^{j}\alpha_{jk}T_k^{\alpha}(t)$, we can see that $$\int_0^1t^{2j+\alpha+1/2}\vp(t)dt=\sum_{k=0}^{j}\alpha_{jk}d_k^n.$$
	Moreover, from the positivity of $\alpha_{jk}$ and the previous Lemma, we have
	\begin{equation}\label{eq0.8}
	d_j^n\leq\frac{1}{\alpha_{jj}}\sum_{k=0}^{j}\alpha_{jk}d_k^n=\frac{1}{\alpha_{jj}}\int_0^1t^{2j+\alpha+1/2}\vp(t)dt.
	\end{equation}
	We recall that from \eqref{e13}, we have $\vp\sim A_n^{\alpha}T_n^{\alpha}(x)$ in the neighbourhood of zero. Hence,
	\begin{equation}\label{eq1.0}
	 U(0)=\lim_{x\to 0}A_n^{\alpha}T_n^{\alpha}(x)=A_n^{\alpha}\frac{(\alpha+1)_n}{n!}\sim n^{\alpha}.
	 \end{equation}
	Finally, by using the inequalities \eqref{eq6}, \eqref{eq0.7}, \eqref{eq0.8} and \eqref{eq1.0}, we get 
	\begin{eqnarray*}
	d_j^n&\leq&C_{\alpha}\frac{(2)^{2j}}{\sqrt{2(2j+\alpha+1)}}\frac{2^{\alpha+j}\Gamma(\alpha+j+1)}{c^{\alpha+1/2}}\frac{|\mu_n^{\alpha}(c)|}{q^j}|U(0)|\\&\leq&\frac{C_{\alpha}}{\sqrt{2(2j+\alpha+1)}}\left(\frac{8}{q}\right)^{j}\Gamma(\alpha+j+1)n^{\alpha}|\mu_n^{\alpha}(c)|.
	\end{eqnarray*} 
\end{proof}
\subsection{Estimates for the truncation errors}
Let's start by the following result where the proof is done in a similar way as the proof of Wang and Zhang in \cite{W.Z} since the Hankel operator have the same properties as the Fourier transform.
	For any $\alpha\geq -1/2$, $c>0$ and for any function $u\in \widetilde{\mathcal H}^r_{\alpha}(0,1)$ with $r\geq 0$, we have
\begin{equation}
\norm{\pi^{\alpha}_{N,c}(u)-u}_{L^2(0,1)}\leq \left(\chi_{N+1}^{\alpha}(c)\right)^{-\frac{r}{2}}\norm{u}_{\widetilde{H}^r_{\alpha}(0,1)}.
\end{equation}
Moreover, for any $0\leq \delta\leq r$, we have
\begin{equation}
\norm{\pi^{\alpha}_{N,c}(u)-u}_{\widetilde{H}^r_{\alpha}(0,1)}\leq \left(\chi_{N+1}^{\alpha}(c)\right)^{\frac{\delta -r}{2}}\norm{u}_{\widetilde{H}^r_{\alpha}(0,1)}
\end{equation}
\begin{remark}
	It's well known that L.L.Wang compares in \cite{W} the topology of the two classical subspaces $\widetilde{\mathcal{H}}^r_{c}\left(I\right)$ and $\mathcal{H}^r\left(I\right)$, satisfy 
	$$\norm{\varphi}_{\widetilde{\mathcal{H}}^{r}_{c}\left(I\right)}\leq C\,(1+c^2)^{r/2}\norm{\varphi}_{\mathcal{H}^r\left(I\right)}.$$
	The previous inequality is similar in our case, so we have an advantage when $(1+c^2)$ takes small values but when it is comparable to $\chi_{N+1}^{\alpha}(c)$ we lose our estimate in $\mathcal{H}^r\left(I\right).$ In the sequel, we will ensure that we can give an estimate of the truncation errors uniformly in $c>0$. So we first give a description of the decay rate of the CPSWFs expansion coefficients in $\{\phi_j^{(\alpha)}\}_{j\geq 1}$ basis of $L^2(0,1)$ in the following lemma, which will help us a lot in solving our problem. 
\end{remark}
In the sequel, using \eqref{e16}, we can guarantee a positive integer $N_{\alpha}(n)$ uniformly in $c$ such that for every $k\leq N_{\alpha}(n),$ we have $$\chi_{n}^{\alpha}(c)\geq(\alpha+2k+\frac{1}{2})(\alpha+2k+\frac{3}{2})+\left(\frac{2\sqrt{\alpha+1}}{(\alpha+2)\sqrt{\alpha+3}}+b_{0,\alpha}\right)c^2,$$
For more details, see for example remark $1$ in \cite{Karoui-Bonami1}. 
\begin{lemma}
	Let $c>0$ and $\alpha>0$ two real numbers, then for every $n\in\mathbb{N}$ such that $n> \frac{ec}{4}$,  we have
	\begin{equation}\label{eq0.9}
	\left|\scal{\phi^{(\alpha)}_j,\vp}_{L^2(0,1)}\right|\leq \frac{M}{\lambda_j} e^{-an},
	\end{equation}
	for every $j$ such that $\lambda_j<N_\alpha(n)$. Here $M>0$ and $a>0$ two real numbers uniformly in $c$. 
\end{lemma}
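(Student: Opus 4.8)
The plan is to collapse the inner product to a single oscillatory integral, read off the exponentially small factor from the finite Hankel eigenvalue, and extract the $1/\lambda_j$ gain from the oscillation of the Bessel kernel; the delicate point is that every naive termwise estimate destroys the cancellation and must be circumvented.

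First I would unfold the definition of $\phi_j^{(\alpha)}$ and write
\[
\scal{\phi_j^{(\alpha)},\vp}_{L^2(0,1)}=\frac{\sqrt2}{\abs{J_{\alpha+1}(\lambda_j)}}\int_0^1\sqrt{x}\,J_\alpha(\lambda_j x)\vp(x)\,dx .
\]
Evaluating the eigenequation \eqref{eq5} at $x=\lambda_j/c$ turns the integral into a value of the finite Hankel transform, which yields the clean identity
\[
\scal{\phi_j^{(\alpha)},\vp}_{L^2(0,1)}=\frac{\sqrt2\,\mu_n^{\alpha}(c)}{\sqrt{\lambda_j}\,\abs{J_{\alpha+1}(\lambda_j)}}\,\vp\!\left(\frac{\lambda_j}{c}\right),
\]
where on the right $\vp$ denotes the band-limited extension to $(0,\infty)$ furnished by \eqref{eq0.5}. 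Using the classical behaviour $\lambda_j\abs{J_{\alpha+1}(\lambda_j)}^2\to 2/\pi$ at the zeros of $J_\alpha$, the prefactor $\frac{\sqrt2}{\sqrt{\lambda_j}\abs{J_{\alpha+1}(\lambda_j)}}$ is bounded above and below by constants depending only on $\alpha$, so everything reduces to controlling $\mu_n^{\alpha}(c)\,\vp(\lambda_j/c)$.

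The exponential factor comes from $\mu_n^{\alpha}(c)$: the hypothesis $n>\frac{ec}{4}$ is exactly the threshold beyond which the finite Hankel eigenvalues decay super-exponentially, $\abs{\mu_n^{\alpha}(c)}\le C\,e^{-an}$ with $a>0$ depending only on $\alpha$; this is the Hankel analogue of the classical PSWF estimate and can be read off from the improved spectral bound \eqref{e16} together with the coefficient decay established above. The residual factor $1/\lambda_j$ is then obtained from a single integration by parts in $\int_0^1\sqrt x\,J_\alpha(\lambda_j x)\vp\,dx$, transferring the $\lambda_j$-oscillation onto $\vp$ via the Bessel recurrence and using the boundary value $J_\alpha(\lambda_j)=0$ to kill the endpoint term; equivalently one checks that in the regime $\lambda_j<N_\alpha(n)$ the uniform Bessel/Jacobi approximations of Section 3 bound $\vp(\lambda_j/c)$ by $O(\lambda_j^{-1/2})$ once the eigenvalue is separated off.

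The main obstacle — and the reason for restricting to $\lambda_j<N_\alpha(n)$ — is the cancellation inside the oscillatory integral. Expanding $J_\alpha(\lambda_j x)$ in its power series and inserting the moment bound \eqref{eq6} term by term produces an alternating series whose absolute values sum to $\exp\!\big(\lambda_j^2\chi_n^{\alpha}(c)/2c^2\big)$, which diverges; the true value is exponentially small only because of enormous cancellation. To capture it I would pass through the Jacobi coefficients, writing $\scal{\phi_j^{(\alpha)},\vp}=\sum_k d_k^n\scal{\phi_j^{(\alpha)},T_k^{\alpha}}$ and exploiting the orthogonality $\int_0^1 u^{\alpha+m}P_k^{(\alpha,0)}(1-2u)\,du=0$ for $m<k$, which forces $\scal{\phi_j^{(\alpha)},T_k^{\alpha}}$ to begin at the $m=k$ term, of size $\big(\tfrac{e\lambda_j}{4k}\big)^{2k}$ up to polynomial factors. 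Since $\lambda_j<N_\alpha(n)<\tfrac{4n}{e}$, the diagonal block $k\approx n$ is suppressed by $\big(\tfrac{e\lambda_j}{4n}\big)^{2n}\le e^{-an}$, while for $k$ away from $n$ the coefficient $d_k^n$ already carries the factor $\mu_n^{\alpha}(c)$ by the decay \eqref{eq0.6}; combining the two ranges and summing over $k$ up to the $c$-uniform threshold $N_\alpha(n)$ gives the bound $\tfrac{M}{\lambda_j}e^{-an}$. The genuinely technical part is making this two-range estimate uniform in $c$ — in particular verifying that $N_\alpha(n)\le 4n/e$ keeps the factorial suppression below a fixed exponential rate independent of $c$.
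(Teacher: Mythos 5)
Your overall strategy coincides with the paper's: reduce the inner product via the finite Hankel identity, expand $\vp$ in the basis $T_k^{\alpha}$ with coefficients $d_k^n$ (the paper phrases this as the spherical Bessel expansion of the band-limited extension, which by Tranter's integral $\int_0^1\sqrt{x}J_\alpha(\lambda_j x)T_k^{\alpha}(x)\,dx=(-1)^k\sqrt{2(2k+\alpha+1)}\,J_{2k+\alpha+1}(\lambda_j)/\lambda_j$ is the \emph{same} decomposition), split the sum at the $c$-uniform threshold $N_\alpha(n)$, use the coefficient-decay theorem (which carries the factor $\mu_n^{\alpha}(c)$, to be beaten against its super-exponential decay) on the first range, and use $|d_k^n|\le 1$ together with order-exceeds-argument suppression on the tail. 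That part of your proposal is the paper's proof. The genuine gap is the factor $1/\lambda_j$: none of the three mechanisms you offer for it survives scrutiny. (i) Integration by parts: the antiderivative of $x^{\alpha+1}J_\alpha(\lambda_j x)$ is $\lambda_j^{-1}x^{\alpha+1}J_{\alpha+1}(\lambda_j x)$, so the endpoint term is $\lambda_j^{-1}J_{\alpha+1}(\lambda_j)\vp(1)$, which is \emph{not} killed by $J_\alpha(\lambda_j)=0$; worse, $\vp(1)$ is not exponentially small in $n$, so this route cannot coexist with the factor $e^{-an}$. (ii) The uniform estimates of Section 3 hold on $(0,1)$ only, while nothing forces $\lambda_j/c\le 1$ (for small $c$ the point $\lambda_j/c$ is far outside the interval); and even granted, $O(\lambda_j^{-1/2})$ would only give $\lambda_j^{-1/2}$ in the conclusion, not $\lambda_j^{-1}$. (iii) In the Jacobi route, your claim that $\scal{\phi_j^{(\alpha)},T_k^{\alpha}}$ has size $\left(\tfrac{e\lambda_j}{4k}\right)^{2k}$ is valid only when the Bessel order dominates the argument, i.e. $2k\gtrsim\lambda_j$; for $k\lesssim\lambda_j$ — a nonempty range precisely because $\lambda_j$ may be as large as $N_\alpha(n)$ — the leading power-series term grossly overestimates the inner product (it even exceeds $1$), so it neither yields the decay in $\lambda_j$ nor keeps the first-range sum under control.

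What the paper actually does at this point is to keep the \emph{exact} evaluation $\scal{\phi_j^{(\alpha)},T_k^{\alpha}}=\dfrac{2(-1)^k\sqrt{2k+\alpha+1}}{\lambda_j\left|J_{\alpha+1}(\lambda_j)\right|}\,J_{2k+\alpha+1}(\lambda_j)$ and then estimate the Bessel factor in two different ways according to the range of $k$: on $k\le N_\alpha(n)$ it invokes the Olenko bound $\sup_{x>0}\sqrt{x}\,\abs{J_\nu(x)}\le C$, which, combined with $\abs{J_{\alpha+1}(\lambda_j)}^{-1}\asymp\sqrt{\lambda_j}$, produces exactly one net factor $\lambda_j^{-1}$ while leaving the exponential smallness to come from $d_k^n$ and the decay of $\mu_n^{\alpha}(c)$; on the tail $k>N_\alpha(n)$ it uses the power-series bound $\abs{J_\nu(\lambda_j)}\le(\lambda_j/2)^\nu/\Gamma(\nu+1)$ — your orthogonality estimate — where it is legitimate because there $2k+\alpha+1>\lambda_j$. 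Repairing your write-up therefore requires inserting this exact formula and the Olenko inequality; with that replacement, the rest of your two-range argument (including your correct observation that $\lambda_j<N_\alpha(n)\le n<4n/e$ forces the tail terms below $(e/4)^{2k}$, hence below $e^{-an}$) matches the paper's proof step by step.
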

\begin{proof}
	For the simplicity, we denote by $\lambda_j=\lambda_j^{(\alpha)}$ the $j-$th order positive zero of the Bessel function $J_\alpha.$ Using the classical transform given in \cite{Boulsane-Jaming-Souabni} by $\mathcal{H}^{\alpha}\left(\vp(\frac{.}{c})\chi_{(0,c)}\right)=c\mu_n^{\alpha}(c)\vp$ on $(0,\infty)$, we have
	\begin{equation} \scal{\phi^{(\alpha)}_j,\vp}_{L^2(0,1)}=\sqrt{\frac{2}{\lambda_j}}\frac{\mathcal{H}^{\alpha}\left(\vp(\frac{.}{c})\chi_{(0,c)}\right)(\frac{\lambda_j}{c})}{c|J_{\alpha+1}(\lambda_j)|}=\sqrt{\frac{2}{\lambda_j}}\frac{\mu_n^{\alpha}(c)}{|J_{\alpha+1}(\lambda_j)|}\vp(\frac{\lambda_j}{c}).
	\end{equation}
	Further, by using the series expansion of $\vp$ in spherical Bessel basis $j_{k,c}^{(\alpha)}$ defined by $$j_{k,c}^{(\alpha)}(x)=\sqrt{2(2k+\alpha+1)}\frac{J_{2k+\alpha+1}(cx)}{\sqrt{cx}},\,\,x\in (0,\infty)$$ 
	given by $\vp=\frac{|\mu_n^{\alpha}(c)|}{\sqrt{c}\mu_n^{\alpha}(c)}\displaystyle\sum_{k=0}^\infty(-1)^kd_k^nj_{k,c}^{(\alpha)}$, 
	we obtain
	\begin{eqnarray*}
	\scal{\phi^{(\alpha)}_j,\vp}_{L^2(0,1)}&=&\sqrt{\frac{2}{\lambda_j}}\frac{|\mu_{n}^{\alpha}(c)|}{\sqrt{c}|J_{\alpha+1}(\lambda_j)|}\sum_{k=0}^\infty(-1)^kd_k^nj_{k,c}^{(\alpha)}(\frac{\lambda_j}{c})\\&=&\frac{2}{\lambda_j|J_{\alpha+1}(\lambda_j)|}\frac{|\mu_n^{\alpha}(c)|}{\sqrt{c}}\sum_{k=0}^{\infty}(-1)^k\sqrt{(2k+\alpha+1)}d_k^nJ_{2k+\alpha+1}(\lambda_j)\\&=&\frac{2}{\lambda_j|J_{\alpha+1}(\lambda_j)|}\frac{|\mu_n^{\alpha}(c)|}{\sqrt{c}}\sum_{k=0}^{N_{\alpha}(n)}(-1)^k\sqrt{(2k+\alpha+1)}d_k^nJ_{2k+\alpha+1}(\lambda_j)\\&+&\frac{2}{\lambda_j|J_{\alpha+1}(\lambda_j)|}\frac{|\mu_n^{\alpha}(c)|}{\sqrt{c}}\sum_{k=N_{\alpha}(n)+1}^{\infty}(-1)^k\sqrt{(2k+\alpha+1)}d_k^nJ_{2k+\alpha+1}(\lambda_j)\\&=&I_1(n)+I_2(n).
	\end{eqnarray*}
For the first part, we must use the Olenko inequality $\displaystyle\sup_{x\in(0,\infty)}|\sqrt{x}J_{\alpha}(x)|\leq C_{\alpha}$ given in \cite{A.YA. OLenko}. Furthermore, by using $|\lambda_j^{1/2}J_{\alpha+1}(\lambda_j)|^{-1}=O(1)$ and \eqref{eq0.6}, we obtain
\begin{eqnarray*}
|I_1^n|&\leq& C_{\alpha}n^{\alpha}\frac{|\mu_n^{\alpha}(c)|^2}{\lambda_j}\sum_{k=0}^{N_{\alpha}(n)}\Gamma(\alpha+k+1)\left(\frac{8}{q}\right)^{k}\\&\leq& C_{\alpha}n^{\alpha}\frac{|\mu_n^{\alpha}(c)|^2}{\lambda_j}\sum_{k=0}^{N_{\alpha}(n)}\left(\frac{8(k+\alpha+1/2)}{qe}\right)^{k+\alpha+1/2}\\&\leq&C_{\alpha}n^{\alpha}\frac{|\mu_n^{\alpha}(c)|^2}{\lambda_j}\sum_{k=0}^{N_{\alpha}(n)}\left(\frac{8(N_{\alpha}(n)+\alpha+1/2)}{qe}\right)^{k+\alpha+1/2}\\&\leq&C_{\alpha}n^{\alpha}\frac{|\mu_n^{\alpha}(c)|^2}{\lambda_j}\left(\frac{8(N_{\alpha}(n)+\alpha+1/2)}{qe}\right)^{N_{\alpha}(n)+\alpha+3/2}.
\end{eqnarray*}
Using the super-exponential decay rate of $\mu_n^{\alpha}(c)$ given in \cite{M.B}, we conclude that $|I_1^n|\leq M_1 e^{-an}.$\\
For the second part, we just use the fact that $|d_k^n|\leq 1$ and the classical inequality $|J_{\alpha}(x)|\leq \frac{x^{\alpha}}{2^{\alpha}\Gamma(\alpha+1)}$ with the same argument $|\lambda_j^{1/2}J_{\alpha+1}(\lambda_j)|^{-1}=O(1)$ and Batir inequality, then we obtain
\begin{eqnarray*}
|I_2^{n}|&\leq& \frac{C_{\alpha}}{\lambda_j}\frac{|\mu_n^{\alpha}(c)|}{\sqrt{c}}\sum_{k=N_{\alpha}(n)+1}^{\infty}\sqrt{(2k+\alpha+1)}\frac{(\lambda_j)^{2k+\alpha+3/2}}{2^{2k+\alpha+1}\Gamma(2k+\alpha+2)}\\&\leq& \frac{C_{\alpha}}{\lambda_j}\frac{|\mu_n^{\alpha}(c)|}{\sqrt{c}}\left(\frac{e\lambda_j}{4N_{\alpha}(n)+2\alpha+5}\right)^{2N_{\alpha}(n)+\alpha+5/2}\\&\leq&\frac{C_{\alpha}}{\lambda_j}\frac{|\mu_n^{\alpha}(c)|}{\sqrt{c}}\left(\frac{eN_{\alpha}(n)}{4N_{\alpha}(n)+2\alpha+5}\right)^{2N_{\alpha}(n)+\alpha+5/2}
\end{eqnarray*}
\end{proof}
By using the previous analysis, we obtain the following main theorem that provides us with the quality of approximation by the CPSWFs series expansion of functions from $\mathcal{H}_\alpha^r(0,1).$
\begin{theorem}
	Let $c>0$ and $\alpha>0$ two real numbers, then for every integer $N>\frac{ec}{4}$ and for every $f\in \mathcal{H}^r_{\alpha}(0,1)$, we have
	\begin{equation}\label{e15}
	\norm{f-\pi^{\alpha}_{N,c}(f)}_{L^2(0,1)}\leq \frac{C^1_{\alpha}}{\lambda^{r}_{N+1}}\norm{f}_{\mathcal{H}^r_{\alpha}\left(0,1\right)}+\frac{C^2_{\alpha}}{N^{1/2}}e^{-aN}\norm{f}_{L^2(0,1)}
	\end{equation}
\end{theorem}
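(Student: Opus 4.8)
The plan is to push everything through Parseval in the orthonormal CPSWF basis and then transfer the estimate to the modified Bessel basis $\{\phi^{(\alpha)}_j\}$, where the Sobolev regularity of $f$ (via the norm equivalence \eqref{e2}) and the exponential decay of the cross coefficients (the preceding lemma \eqref{eq0.9}) are simultaneously available. Writing $a_n(f)=\scal{f,\vp}_{L^2(0,1)}$, $b_j(f)=\scal{\phi^{(\alpha)}_j,f}_{L^2(0,1)}$ and $c_{j,n}=\scal{\phi^{(\alpha)}_j,\vp}_{L^2(0,1)}$, the completeness of $\{\phi^{(\alpha)}_j\}$ in $L^2(0,1)$ gives $f=\sum_{j\geq1}b_j(f)\phi^{(\alpha)}_j$, hence $a_n(f)=\sum_{j\geq1}c_{j,n}\,b_j(f)$, while orthonormality of the $\vp$ gives
$$\norm{f-\pi^{\alpha}_{N,c}(f)}^2_{L^2(0,1)}=\sum_{n>N}\abs{a_n(f)}^2.$$

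I would then fix the single cutoff $J_0=\max\set{j:\ \lambda_j<N_\alpha(N+1)}$ and split $a_n(f)=A_n+B_n$ along $\lambda_j<N_\alpha(N+1)$ versus $\lambda_j\geq N_\alpha(N+1)$, so that $\sum_{n>N}\abs{a_n(f)}^2\leq 2\sum_{n>N}\abs{A_n}^2+2\sum_{n>N}\abs{B_n}^2$. The reason for a single, $n$-independent cutoff is that $N_\alpha(\cdot)$ is nondecreasing (being governed by the increasing sequence $\chi^{\alpha}_n(c)$), so the hypothesis $\lambda_j<N_\alpha(n)$ of \eqref{eq0.9} holds at once for every $n>N$ and every $j\leq J_0$. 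For the high part, the tails assemble into $g=\sum_{j>J_0}b_j(f)\phi^{(\alpha)}_j$ with $B_n=\scal{g,\vp}$, and Parseval (used in both bases) yields
$$\sum_{n>N}\abs{B_n}^2\leq\norm{g}^2_{L^2(0,1)}=\sum_{j>J_0}\abs{b_j(f)}^2\leq\lambda_{J_0+1}^{-2r}\sum_{j>J_0}\lambda_j^{2r}\abs{b_j(f)}^2\leq C\,\lambda_{J_0+1}^{-2r}\norm{f}^2_{\mathcal{H}^r_{\alpha}(0,1)},$$
the last inequality being the norm equivalence \eqref{e2}.

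To turn this into the announced first term I would compare $\lambda_{J_0+1}$ with $\lambda_{N+1}$: by the choice of $J_0$ one has $\lambda_{J_0+1}\geq N_\alpha(N+1)$, and the improved spectral bounds \eqref{e16} force $N_\alpha(N+1)$ to be comparable to $N+1$, hence (since $\lambda_j\sim\pi j$) to $\lambda_{N+1}$; thus $\lambda_{J_0+1}\gtrsim\lambda_{N+1}$ and the high part is bounded by $C^1_\alpha\,\lambda_{N+1}^{-r}\norm{f}_{\mathcal{H}^r_{\alpha}(0,1)}$. For the low part I would invoke \eqref{eq0.9}: since $N>ec/4$ and $\lambda_j<N_\alpha(N+1)\leq N_\alpha(n)$ for $j\leq J_0$ and $n>N$, one has $\abs{c_{j,n}}\leq M\lambda_j^{-1}e^{-an}$, whence by Cauchy--Schwarz in $j$,
$$\abs{A_n}\leq M\,e^{-an}\sum_{j\leq J_0}\frac{\abs{b_j(f)}}{\lambda_j}\leq M\,e^{-an}\Big(\sum_{j\geq1}\lambda_j^{-2}\Big)^{1/2}\norm{f}_{L^2(0,1)},$$
with $\sum_{j\geq1}\lambda_j^{-2}<\infty$ because $\lambda_j\sim\pi j$. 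Summing the geometric series $\sum_{n>N}e^{-2an}\leq(1-e^{-2a})^{-1}e^{-2a(N+1)}$ and taking square roots already gives an exponential term $\lesssim e^{-aN}\norm{f}_{L^2(0,1)}$; the sharper prefactor $N^{-1/2}$ is then recovered by keeping, instead of its exponential majorant, the genuine super-exponential decay of $\mu^{\alpha}_n(c)$ that \eqref{eq0.9} carries, whose tail sum over $n>N$ is dominated by its first term together with a Laplace/Stirling-type factor $N^{-1/2}$.

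The main obstacle is not a single hard inequality---given the lemma and \eqref{e2} each piece is Cauchy--Schwarz plus a summation---but the calibration that keeps both estimates uniform in $c$ and pinned to one cutoff $N_\alpha(N+1)$: one must (i) secure the constraint $\lambda_j<N_\alpha(n)$ for all $n>N$ through monotonicity of $N_\alpha$, and (ii) convert $\lambda_{J_0+1}\geq N_\alpha(N+1)$ into the clean $\lambda_{N+1}$ of the statement, which is precisely where the improved eigenvalue bounds \eqref{e16} are indispensable. The most delicate point is extracting the exact $N^{-1/2}$ in the exponential term, which needs the true super-exponential rate of $\mu^{\alpha}_n(c)$ rather than the exponential bound recorded in \eqref{eq0.9}.
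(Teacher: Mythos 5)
Your proposal is correct, and it is built from the same two ingredients as the paper's proof --- the norm equivalence \eqref{e2} for the Sobolev term and the cross-coefficient decay \eqref{eq0.9} for the exponential term --- but you deploy them through a genuinely different decomposition, and the difference is substantive. The paper truncates $f$ in the Bessel basis at the index $N$ itself: with $g=\sum_{j=1}^{N}\scal{f,\phi^{(\alpha)}_j}\phi^{(\alpha)}_j$ it writes $\norm{f-\pi^{\alpha}_{N,c}(f)}_{L^2}\leq 2\norm{f-g}_{L^2}+\norm{g-\pi^{\alpha}_{N,c}(g)}_{L^2}$, which yields the clean factor $\lambda_{N+1}^{-r}$ immediately for the first piece, and then applies \eqref{eq0.9} to every pair $(j,n)$ with $j\leq N<n$ for the second. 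That last step is the weak point of the paper's argument: \eqref{eq0.9} is stated only for $\lambda_j<N_{\alpha}(n)$, and since $\lambda_j\sim\pi j$ while $N_{\alpha}(n)$ cannot exceed roughly $n$ (the $c^2$-coefficient in its defining condition is at least $1$, larger than the $c^2$ in Slepian's upper bound for $\chi_n^{\alpha}(c)$), this hypothesis fails for $j$ near $N$ and $n$ near $N+1$; the paper never addresses this. Your cutoff $J_0=\max\set{j:\,\lambda_j<N_{\alpha}(N+1)}$, combined with the monotonicity of $N_{\alpha}$, keeps every invocation of \eqref{eq0.9} inside its stated range, so your low-frequency estimate is the one that actually parses. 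The price you pay is the extra comparison $\lambda_{J_0+1}\geq N_{\alpha}(N+1)\gtrsim N+1\gtrsim\lambda_{N+1}$ --- and you correctly identify this as exactly where \eqref{e16} and the hypothesis $N>\frac{ec}{4}$ are indispensable --- together with a constant of the form $C_{\alpha}^{\,r}$ in front of $\lambda_{N+1}^{-r}$, which the paper's cutoff avoids. On the final point you flag: the paper's own proof does not really produce the prefactor $N^{-1/2}$ either (Cauchy--Schwarz against $\sum_j\lambda_j^{-2}<\infty$ gives only a constant), and the simplest repair, available to both arguments, is to absorb any polynomial factor into the exponential by replacing $a$ with a slightly smaller $a'>0$; your alternative route through the super-exponential decay of $\mu_n^{\alpha}(c)$ also works, but is more machinery than is needed.
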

\begin{proof}
	Let $g$ be the projection of $f$ over span of $\{\phi^{(\alpha)}_j; \, 1\leq j\leq N\}$, then we have
	\begin{eqnarray*}
	\norm{f-\pi^{\alpha}_{N,c}(f)}_{L^2(0,1)}&\leq& \norm{f-g}_{L^2(0,1)}+\norm{g-\pi^{\alpha}_{N,c}(g)}_{L^2(0,1)}+\norm{\pi^{\alpha}_{N,c}(g)-\pi^{\alpha}_{N,c}(f)}_{L^2(0,1)}\\&\leq&2\norm{f-g}_{L^2(0,1)}+\norm{g-\pi^{\alpha}_{N,c}(g)}_{L^2(0,1)}
	\end{eqnarray*}
	We recall that $g=\displaystyle\sum_{j=1}^{N}\scal{f,\phi_j}\phi_j$, then by using the Perseval's formula and \eqref{e2}, we get the following estimate
	\begin{eqnarray*}
	\norm{f-g}^2_{L^2(0,1)}&=&\sum_{j=N+1}^{\infty}\scal{f,\phi_j}^2\\&\leq& \frac{1}{\lambda^{2r}_{N+1}}\sum_{j=N+1}^{\infty}\lambda_j^{2r}\scal{f,\phi_j}^2\leq \frac{1}{\lambda^{2r}_{N+1}}\norm{f}^2_{\mathcal{H}^r_{\alpha}\left(0,1\right)}.
	\end{eqnarray*}
For the second part of our estimate, we use the Cauchy-Schwartz inequality with again \eqref{eq0.9}, we obtain
\begin{eqnarray*}
\norm{g-\pi^{\alpha}_{N,c}(g)}_{L^2(0,1)}&=&\norm{\displaystyle\sum_{j=1}^{N}\scal{f,\phi_j}\left(\phi_j-\pi^{\alpha}_{N,c}(\phi_j)\right)}_{L^2(0,1)}\\&\leq&\sum_{j=1}^{N}\left|\scal{f,\phi_j}\right|\norm{\phi_j-\pi^{\alpha}_{N,c}(\phi_j)}_{L^2(0,1)}\\&\leq&\sum_{j=1}^{N}\left|\scal{f,\phi_j}\right|\left[\sum_{n=N+1}^{\infty}\left|\scal{\vp,\phi_j}\right|^2\right]^{1/2}\\&\leq&\frac{C_{\alpha}}{N^{1/2}}e^{-aN}\norm{f}_{L^2(0,1)}.
\end{eqnarray*}	
\end{proof}
	Let $p\geq 1$ and $I=(0,1)$, we define the Hankel Sobolev space $\mathcal{H}^{p,r}_{\alpha}\left(I\right)$ the extension of $\mathcal{H}^r_{\alpha}\left(I\right)=\mathcal{H}^{2,r}_{\alpha}\left(I\right)$ over the $L^p$ space, that is
	\begin{equation}
	\mathcal{H}^{p,r}_{\alpha}\left(I\right)=\{u\in L^p(I);~~~ x^{(\alpha+k+1/2)}\left(\frac{1}{x}\frac{d}{dx}\right)^{(k)}\left(x^{-(\alpha+1/2)}u\right)\in L^p(I)~\,,\forall\, 0\leq k\leq r\}.
	\end{equation}
	It was proven in \cite{Boulsane-Jaming-Souabni} that $\pi^{\alpha}_{N,c}$ is uniformly bounded on $L^p(I)$ if and only if $4/3<p<4$. Moreover,
	we can easily guarantee that $L^2(I)\subset L^p(I)$ using the H\"older inequality for $1\leq p<2$, that is,
	for every $f\in L^2(I)$, we have \, $\norm{f}_{L^p(I)}\leq \left(\mu(I)\right)^{\frac{1}{p}-\frac{1}{2}}\norm{f}_{L^2(I)}.$
	From the previous two informations and by density with the previous theorem, we can extend the quality of convergence speed of \eqref{e15} over $\mathcal{H}^{p,r}_{\alpha}\left(I\right).$ Indeed, let $f\in\mathcal{H}^{p,r}_{\alpha}\left(I\right)$ and $(f_n)_n\subset \mathcal{C}_c^\infty(I)$ such that $f_n\to f$ when $n\to\infty$ in $\mathcal{H}^{p,r}_{\alpha}$ norm. Then for every $n\geq 0,$ we have
	\begin{eqnarray*}
	\norm{f-\pi^{\alpha}_{N,c}(f)}_{L^p(0,1)}&\leq& C\norm{f-f_n}_{L^p(0,1)}+\norm{f_n-\pi^{\alpha}_{N,c}(f_n)}_{L^p(0,1)}\\&\leq&C\norm{f-f_n}_{L^p(0,1)}+\norm{f_n-\pi^{\alpha}_{N,c}(f_n)}_{L^2(0,1)}\\&\leq&C\norm{f-f_n}_{L^p(0,1)}+\frac{C^1_{\alpha}}{\lambda^{r}_{N+1}}\norm{f_n}_{\mathcal{H}^r_{\alpha}\left(0,1\right)}+\frac{C^2_{\alpha}}{N^{1/2}}e^{-aN}\norm{f_n}_{L^2(0,1)}.
	\end{eqnarray*}
Let $g\in\mathcal{C}_c^\infty(I),$ then the series expansion of $g$ in CPSWFs basis is given by $g=\displaystyle\sum_{n=0}^{\infty}\scal{g,\vp}_{L^2(0,1)}\vp.$ From \cite{Boulsane-Jaming-Souabni}, we have for every $1\leq q\leq\infty,$ there exist $0\leq\gamma_q<1$ such that $\norm{\vp}_{L^q(0,1)}\lesssim n^{\gamma_q}$. Moreover, from the H\"older inequality and the fact that $\mathcal{L}_c^{\alpha}$ is self adjoint, we obtain
\begin{eqnarray*} \norm{g}^2_{L^2(0,1)}&=&\displaystyle\sum_{n=0}^{\infty}\scal{g,\vp}^2_{L^2(0,1)}\\&=&\displaystyle\sum_{n=0}^{\infty}\frac{1}{\left(\chi_n^{\alpha}(c)\right)^2}\scal{\mathcal{L}_c^{\alpha}g,\vp}^2_{L^2(0,1)}\\&\lesssim& \left[\displaystyle\sum_{n=0}^{\infty}n^{2(\gamma_q-2)}\right]\norm{\mathcal{L}_c^{\alpha}g}^2_{L^p(0,1)}\lesssim \norm{g}^2_{L^p(0,1)}.\end{eqnarray*}
It follows that
\begin{equation*}
\norm{f-\pi^{\alpha}_{N,c}(f)}_{L^p(0,1)}\leq C\norm{f-f_n}_{L^p(0,1)}+\frac{C^1_{\alpha}}{\lambda^{r}_{N+1}}\norm{f_n}_{\mathcal{H}^{p,r}_{\alpha}\left(0,1\right)}+\frac{C^2_{\alpha}}{N^{1/2}}e^{-aN}\norm{f_n}_{L^p(0,1)}.
\end{equation*}
By tending $n\to\infty$, we obtain
\begin{equation*}
\norm{f-\pi^{\alpha}_{N,c}(f)}_{L^p(0,1)}\leq\frac{C^1_{\alpha}}{\lambda^{r}_{N+1}}\norm{f}_{\mathcal{H}^{p,r}_{\alpha}\left(0,1\right)}+\frac{C^2_{\alpha}}{N^{1/2}}e^{-aN}\norm{f}_{L^p(0,1)}.
\end{equation*}
Finally, by duality we get our extension theorem.
\begin{corollary}
	Let $c>0$ and $\alpha>0$ two real numbers and $4/3<p<4,$ then for every $N>\frac{ec}{4}$ and $f\in\mathcal{H}^{p,r}_{\alpha}(0,1)$, we have
	\begin{equation}
	\norm{f-\pi^{\alpha}_{N,c}(f)}_{L^p(0,1)}\leq\frac{C^1_{\alpha}}{\lambda^{r}_{N+1}}\norm{f}_{\mathcal{H}^{p,r}_{\alpha}\left(0,1\right)}+\frac{C^2_{\alpha}}{N^{1/2}}e^{-aN}\norm{f}_{L^p(0,1)}.
	\end{equation}
\end{corollary}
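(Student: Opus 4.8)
The plan is to bootstrap the $L^2$ estimate \eqref{e15} to the whole range $4/3<p<4$ using three facts recorded above: the uniform-in-$c$ $L^p$-boundedness of $\pi^{\alpha}_{N,c}$ on that range, the elementary embedding $\norm{h}_{L^p(0,1)}\le\norm{h}_{L^2(0,1)}$ valid for $p\le 2$ (since $\mu(I)=1$), and a spectral comparison between the $L^2$- and $L^p$-based norms on smooth functions. I would first settle $4/3<p\le 2$ by a density argument, then recover $2\le p<4$ by duality.

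For $4/3<p\le 2$, fix $f\in\mathcal{H}^{p,r}_{\alpha}(0,1)$ and pick $f_m\in\mathcal{C}_c^{\infty}(0,1)$ with $f_m\to f$ in $\mathcal{H}^{p,r}_{\alpha}$. The triangle inequality gives
\[\norm{f-\pi^{\alpha}_{N,c}(f)}_{L^p}\le\norm{(I-\pi^{\alpha}_{N,c})(f-f_m)}_{L^p}+\norm{f_m-\pi^{\alpha}_{N,c}(f_m)}_{L^p},\]
where the first term is $\le C\norm{f-f_m}_{L^p}$ by the $L^p$-boundedness of $\pi^{\alpha}_{N,c}$ and vanishes as $m\to\infty$. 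For the second term I would invoke $\norm{\cdot}_{L^p}\le\norm{\cdot}_{L^2}$ and then \eqref{e15} applied to the smooth $f_m$, reducing matters to replacing the $L^2$-based norms $\norm{f_m}_{\mathcal{H}^r_{\alpha}}$ and $\norm{f_m}_{L^2}$ by their $L^p$-based analogues and letting $m\to\infty$.

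The decisive step is this norm comparison. For smooth $g$ I would expand $g=\sum_n\scal{g,\vp}\vp$, use $-\mathcal{L}_c^{\alpha}\vp=\chi_n^{\alpha}(c)\vp$ together with the self-adjointness of $\mathcal{L}_c^{\alpha}$ to write $\scal{g,\vp}=(\chi_n^{\alpha}(c))^{-1}\scal{\mathcal{L}_c^{\alpha}g,\vp}$, and bound each pairing by Hölder, $\abs{\scal{\mathcal{L}_c^{\alpha}g,\vp}}\le\norm{\mathcal{L}_c^{\alpha}g}_{L^p}\norm{\vp}_{L^q}$ with $1/p+1/q=1$. Feeding in the sub-polynomial growth $\norm{\vp}_{L^q}\lesssim n^{\gamma_q}$ with $0\le\gamma_q<1$ and the lower bound $\chi_n^{\alpha}(c)\gtrsim n^2$ from \eqref{e16} yields
\[\norm{g}^2_{L^2}=\sum_n\frac{\scal{\mathcal{L}_c^{\alpha}g,\vp}^2}{(\chi_n^{\alpha}(c))^2}\lesssim\Big(\sum_n n^{2(\gamma_q-2)}\Big)\norm{\mathcal{L}_c^{\alpha}g}^2_{L^p},\]
the series converging because $2(\gamma_q-2)<-2$. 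An entirely analogous computation with the Bessel operator $-\mathcal{L}$, together with the equivalence \eqref{e2}, controls $\norm{g}_{\mathcal{H}^r_{\alpha}}$ by its $\mathcal{H}^{p,r}_{\alpha}$-counterpart. Substituting and passing to the limit closes the case $p\le 2$.

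Finally, for $2\le p<4$ I would dualize. Since $\pi^{\alpha}_{N,c}$ is the $L^2$-orthogonal projection onto $\mathrm{span}\{\vp:n\le N\}$, the operator $I-\pi^{\alpha}_{N,c}$ is self-adjoint and idempotent, so for $h=(I-\pi^{\alpha}_{N,c})f$ and $p'=p/(p-1)\in(4/3,2]$ one has
\[\norm{h}_{L^p}=\sup_{\norm{\phi}_{L^{p'}}\le1}\abs{\scal{f,(I-\pi^{\alpha}_{N,c})\phi}},\]
and applying the already-established $L^{p'}$ estimate to $(I-\pi^{\alpha}_{N,c})\phi$ transfers the decay to $p>2$. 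I expect the main obstacle to be the norm comparison: $\norm{g}_{L^2}\lesssim\norm{g}_{L^p}$ is false without regularity, so one must carry $\mathcal{L}_c^{\alpha}$ through the estimate and verify that the resulting higher-order $L^p$ control is absorbed into the $\mathcal{H}^{p,r}_{\alpha}$-norm; this is precisely where the uniform growth of $\chi_n^{\alpha}(c)$ and the sub-polynomial $L^q$-growth of the prolates are indispensable. Arranging the duality so that the regularity lands on $f$, rather than merely reproving $L^p$-boundedness of $I-\pi^{\alpha}_{N,c}$, is the second delicate point.
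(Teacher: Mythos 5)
Your proposal takes essentially the same route as the paper's own proof: for $4/3<p\le 2$ a density argument combining the uniform $L^p$-boundedness of $\pi^{\alpha}_{N,c}$, the embedding $\norm{\cdot}_{L^p}\le\norm{\cdot}_{L^2}$ on $(0,1)$, the $L^2$ estimate \eqref{e15} applied to smooth approximants, and exactly the paper's spectral comparison $\scal{g,\vp}=\left(\chi_n^{\alpha}(c)\right)^{-1}\scal{\mathcal{L}_c^{\alpha}g,\vp}$ with H\"older and $\norm{\vp}_{L^q}\lesssim n^{\gamma_q}$, $\gamma_q<1$, followed by duality for $2<p<4$. The two delicate points you flag at the end, namely replacing the $L^2$-based norms of $f_n$ by their $L^p$-based analogues and arranging the duality so the regularity lands on $f$, are left at the same (sketchy) level of detail in the paper itself, which simply asserts the norm replacement and concludes with ``by duality.''
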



\begin{thebibliography}{99}
\bibitem{Andrews} G. E. Andrews,  R. Askey, R. Roy, Special Functions. Cambridge University Press, Cambridge, New York, 1999.

\bibitem{B.N}
N. Batir, Inequalities for the gamma function. J. Arch. Math. (2008), {\bf 91}, 554--563.
	
\bibitem{Karoui-Bonami1}
A.Bonami and A. Karoui , Approximation in Sobolev space by prolate spheroidal wave functions.
J. Appl. Comput. Harmon. Anal. (2017), {\bf 42}, 361--377.

\bibitem{Karoui-Bonami2}
A.Bonami and A. Karoui , Uniform Approximation and Explicit Estimates for the Prolate Spheroidal Wave Functions. J. Constr. Approx. (2016), {\bf 43}, 15--45.


\bibitem{Boulsane-Jaming-Souabni} M. Boulsane, P. Jaming and A. Souabni, Mean convergence of prolate spheroidal series and their extensions, J. Funct. Anal. (2019), {\bf 277}, 108--295.

\bibitem{Karoui-Boulsane} M. Boulsane and A. Karoui.  The Finite Hankel Transform Operator: Some Explicit and Local Estimates of the Eigenfunctions and Eigenvalues Decay Rates. J. Four. Anal. Appl. (2018), {\bf 24}, 1554--1578.

\bibitem{M.B}
M. Boulsane, Non-asymptotic behavior and the distribution of the spectrum of the finite Hankel transform operator. J. Integr. Transforms Special Funct. (2021), {\bf 12}, 948--968.

\bibitem{S.B}
S. Breen. Uniform upper and lower bound of the zero of Bessel functions of the first kind. J. Math. Anal. Appl. (1995), {\bf 196}, 1--17.

\bibitem{Elbert}
\'A. Elbert, Some recent results on the zeros of Bessel functions and orthogonal polynomials. Journal of Computational and Applied Mathematics. (2001), {\bf 133}, 65--83.

\bibitem{E}
A. Erdélyi,  Higher Transcendental Functions, vol II. McGraw-Hill Book company, INC. New York (1953).

\bibitem{H}
H. Hochstadt, The mean convergence of Fourier–Bessel series, SIAM Rev. (1967), {\bf 9}, 211--218.

\bibitem{K.S}
A. Karoui and A. Souabni , Weighted Finite Fourier Transform Operator: Uniform Approximations of the Eigenfunctions, Eigenvalues Decay and Behaviour, J. Sci. Comput. (2017), {\bf 71}, 547--570.

\bibitem{K.M}
A. Karoui, I. Mehrzi. Asymptotic behaviors and numerical computations of the eigenfunctions and eigenvalues associated with the classical and circular prolate spheroidal wave functions. J. Appl. Math. Comput. (2012), {\bf 218}, 10871--10888.

\bibitem{V.S.M}
V. V. Kravchenko, S. Morelos and S. M. Torba, Liouville transformation, analytic approximation of transmutation operators and solution of spectral problems. J. Appl. Math. Comput. (2019), {\bf 273}, 321--336.

\bibitem{N}
S. Nicaise. Jacobi Polynomials, Weighted Sobolev Spaces and Approximation Results of Some Singularities. Math. Nachr. (2000), {\bf 213}, 117--140.

\bibitem{O.L.B.C}
F. W. J. Olver , D. W. Lozier, R. F. Boisvert and C. W. Clark . NIST Handbook of Mathematical Functions. Cambridge University Press; New York, (2010).

\bibitem{Olver}
F. W.J. Olver, Asymptotics and Special functions. Academic Press; New York, (1974).

\bibitem{A.YA. OLenko} A.YA. OLenko. Upper bound on $\sqrt{x}J_{\mu}(x)$ and its applications. Integr. Transforms Special Functions. (2006),  {\bf 17},
455--467.

\bibitem{Slepian3} D. Slepian, Prolate spheroidal wave functions, Fourier analysis and uncertainty--IV: Extensions to many dimensions; generalized
prolate spheroidal functions. { Bell System Tech. J.} (1964), {\bf 43}, 3009--3057.

\bibitem{S.B}
A. Souabni and  N. Bourguiba, Further Spectral Properties of the Weighted Finite Fourier Transform Operator and Approximation in Weighted Sobolev Spaces. Math J Meth Appl Sci. (2021), {\bf 44}, 634--649.

\bibitem{W.Z}
L. L. Wang, J. Zhang, A new generalization of the PSWFs with applications to spectral approximations on quasi-uniform grids.
J. Appl. Comput. Harmon. Anal. (2010), {\bf 29}, 303--329.

\bibitem{W}
L. L. Wang. Analysis of Spectral Approximations using Prolate Spheroidal Wave Functions. J.
Math. Comput. (2010), {\bf 270}, 807--827.

\bibitem{Watson} G.N. Watson . A treatise on the theory of Bessel functions. Second edition, Cambridge University Press. 1966.
\end{thebibliography}
\end{document}